\crefname{equation}{}{}
\def\UrlSpecials{\do\~{\kern -.15em\lower .7ex\hbox{~}\kern .04em}}  \catcode`~=13 %
\tikzstyle{vertex}=[circle,draw=black,fill=black,inner sep=0,minimum size=0.2cm,text=white,font=\footnotesize]
\crefname{algocf}{Algorithm}{Algorithms}
\crefname{equation}{}{} %remove ``Equation''
\colorlet{refkey}{orange!20}
\colorlet{labelkey}{blue!30}
\crefname{algocf}{Algorithm}{Algorithms}
\numberwithin{equation}{section}
\newtheorem{theorem}{Theorem}[section]
\newtheorem{lemma}[theorem]{Lemma}
\crefname{claim}{Claim}{Claims}
\newtheorem*{question*}{Question}
\theoremstyle{definition}
\newtheorem{definition}[theorem]{Definition}
\newtheorem*{definition*}{Definition}
\theoremstyle{remark}
\newcommand{\bs}{\boldsymbol}
\newcommand{\mb}{\mathbb}
\newcommand{\mbf}{\mathbf}
\newcommand{\mc}{\mathcal}
\newcommand{\on}{\operatorname}
\title{Large Deviations in Random Latin Squares}
\author[Kwan]{Matthew Kwan}
\address{Institute of Science and Technology Austria (IST Austria), 3400 Klosterneuburg, Austria}
\email{matthew.kwan@ist.ac.at}
\author[A2]{Ashwin Sah}
\author[A3]{Mehtaab Sawhney}
\address{Department of Mathematics, Massachusetts Institute of Technology, Cambridge, MA}
\email{\{asah,msawhney\}@mit.edu}
\thanks{Kwan was supported by NSF grant DMS-1953990. Sah and Sawhney were supported by NSF Graduate Research Fellowship Program DGE-1745302.}
\begin{document}

\begin{abstract}
In this note, we study large deviations of the number $\mathbf{N}$
of \emph{intercalates} ($2\times2$ combinatorial subsquares which are themselves Latin squares) in a random
$n\times n$ Latin square. In particular, for constant $\delta>0$ we
prove that $\exp(-O(n^{2}\log n))\le \Pr(\mathbf{N}\le(1-\delta)n^{2}/4)\le\exp(-\Omega(n^{2}))$
and $\exp(-O(n^{4/3}(\log n)))\le \Pr(\mathbf{N}\ge(1+\delta)n^{2}/4)\le\exp(-\Omega(n^{4/3}(\log n)^{2/3}))$.
As a consequence, we deduce that a typical order-$n$ Latin square has $(1+o(1))n^{2}/4$
intercalates, matching a lower bound due to Kwan and Sudakov and resolving an old conjecture of McKay and Wanless.
\end{abstract}

\maketitle

\section{Introduction} \label{sec:introduction}

A \emph{Latin square} (of order $n$) is an $n\times n$ array filled
with the numbers $1$ through $n$ (we call these \emph{symbols}),
such that every symbol appears exactly once in each row and column.
Latin squares are a fundamental type of combinatorial design, and
in their various guises they play an important role in many contexts
(ranging, for example, from group theory, to experimental design,
to the theory of error-correcting codes). A classical introduction
to the subject of Latin squares can be found in \cite{KD15}. More recently,
Latin squares have also played a role in the ``high-dimensional combinatorics''
program spearheaded by Linial, where they can be viewed as the first
nontrivial case of a ``high-dimensional permutation''\footnote{To see the analogy to permutation matrices, note that a Latin square
can equivalently, and more symmetrically, be viewed as an $n\times n\times n$
zero-one array such that every axis-aligned line sums to exactly 1.} (see for example \cite{LL14,LL16,LS18}).

There are still a number of surprisingly basic questions about Latin
squares that remain unanswered, especially with regard to statistical
aspects. For example, there is still a big gap between
the best known upper and lower bounds on the number of order-$n$
Latin squares (see for example \cite[Chapter~17]{vW01}), and there is no known algorithm that (provably) efficiently generates a random order-$n$ Latin square\footnote{Jacobson and Matthews~\cite{JM96} and Pittenger~\cite{Pit97} designed Markov
chains that converge to the uniform distribution, but it is not known
whether these Markov chains mix rapidly.}. Perhaps the main difficulty is that Latin squares are extremely
``rigid'' objects: in general there is very little freedom to make
local perturbations to change one Latin square into another.

Despite this difficulty, there are a number of theorems that have been
rigorously proved about random Latin squares (and a larger number
of conjectures and speculations); see for example \cite{LS18,Kwa20,KS18,CGW08,Cam92,HJ96,Cam15,MW05,LL16,van90,Wan11,MW99}. A large portion
of this work has focused on existence and enumeration of various types
of substructures. As perhaps the simplest nontrivial example, an \emph{intercalate}
in a Latin square $L$ is an order-2 Latin (combinatorial) subsquare. That is, it is
a pair of rows $i<j$ and a pair of columns $x<y$ such that $L_{i,x}=L_{j,y}$
and $L_{i,y}=L_{j,x}$ (see \cref{fig:latin}). It is a classical fact that
(for all orders except 2 and 4) there exist Latin squares with no
intercalates~\cite{KLR75,KT76,McL75}. However, in 1999 McKay and Wanless~\cite{MW99}
proved that with probability $1-\exp(-n^{2-o(1)})$
a random order-$n$ Latin square has at least one intercalate, and
that with probability $1-o(1)$ there are at least $n^{3/2-o(1)}$
intercalates. In the same paper, they conjectured that the
typical number of intercalates is $(1+o(1))n^{2}/4$.
More recently, Kwan and Sudakov~\cite{KS18} proved the lower bound in
this conjecture---that random Latin squares typically have \emph{at
least} this many intercalates (see also \cite{CGW08} for previous progress on this conjecture). In the present paper we finally resolve
McKay and Wanless' conjecture in full.

\begin{figure}
\begin{centering}
\begin{tabular}{|c|c|c|c|c|}
\hline 
4 & \textbf{1} & 5 & \textbf{3} & 2\tabularnewline
\hline 
5 & \textbf{3} & 2 & \textbf{1} & 4\tabularnewline
\hline 
2 & 4 & 1 & 5 & 3\tabularnewline
\hline 
3 & 5 & 4 & 2 & 1\tabularnewline
\hline 
1 & 2 & 3 & 4 & 5\tabularnewline
\hline 
\end{tabular}\qquad\qquad\qquad\qquad
$\vcenter{\hbox{\includegraphics{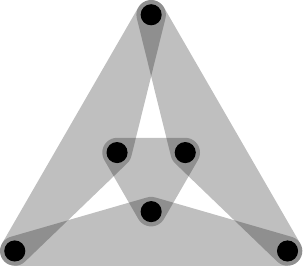}}}$

% \begin{tikzpicture}[scale=0.4,baseline=5pt]

% \def\rot{0}
% \def\dir{-1}
% \foreach \x in {1,...,3}{%
%   \coordinate (\x-1) at ({\rot+\dir*360/3 * (\x - 1)-90}:1) {};
%   \coordinate (\x-2) at ({\rot+\dir*360/3 * (\x + 0.5)-90}:4) {};
%   \node [vertex] at (\x-1) {};
%   \node [vertex] at (\x-2) {};
% }
% \begin{scope}[opacity=0.25, transparency group]
% \fill (1-1) -- (2-1) -- (3-1) -- (1-1);
% \draw[line width=3mm,line join=round,line cap=round] (1-1) -- (2-1) -- (3-1) -- (1-1);
% \end{scope}

% \begin{scope}[opacity=0.25, transparency group]
% \fill (1-1) -- (2-2) -- (3-2) -- (1-1);
% \draw[line width=3mm,line join=round,line cap=round] (1-1) -- (2-2) -- (3-2) -- (1-1);
% \end{scope}

% \begin{scope}[opacity=0.25, transparency group]
% \fill (2-1) -- (3-2) -- (1-2) -- (2-1);
% \draw[line width=3mm,line join=round,line cap=round] (2-1) -- (3-2) -- (1-2) -- (2-1);
% \end{scope}

% \begin{scope}[opacity=0.25, transparency group]
% \fill (3-1) -- (1-2) -- (2-2) -- (3-1);
% \draw[line width=3mm,line join=round,line cap=round] (3-1) -- (1-2) -- (2-2) -- (3-1);
% \end{scope}

% \end{tikzpicture}
\par\end{centering}
\caption{\label{fig:latin}On the left is an example of a Latin square of order 5 with an intercalate
in bold (in the first and second rows, and the second and fourth columns).
On the right is depicted the 3-uniform hypergraph representation of
an intercalate.}
\end{figure}

\begin{theorem}\label{thm:MW}
Let $\mathbf{L}$ be a uniformly random order-$n$ Latin square.
Then, with probability $1-o(1)$, the number of intercalates
in $\mathbf{L}$ is $(1+o(1))n^{2}/4$.
\end{theorem}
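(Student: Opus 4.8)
The plan is to derive \cref{thm:MW} from the two quantitative tail bounds announced in the abstract; the deduction is immediate, so the substance lies entirely in those bounds. Indeed, fix $\varepsilon>0$. The lower-tail bound gives $\Pr(\mathbf N\le(1-\varepsilon)n^2/4)\le\exp(-\Omega(n^2))=o(1)$, and the upper-tail bound gives $\Pr(\mathbf N\ge(1+\varepsilon)n^2/4)\le\exp(-\Omega(n^{4/3}(\log n)^{2/3}))=o(1)$, so, $\varepsilon$ being arbitrary, $\mathbf N=(1+o(1))n^2/4$ with probability $1-o(1)$. (The lower bound on $\mathbf N$ is already the theorem of Kwan and Sudakov~\cite{KS18}, so strictly speaking only the upper-tail estimate adds anything new; it is nonetheless cleanest to prove both tails from scratch.) I sketch the strategy for the two tail bounds below.

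It is convenient to record the following dictionary. Writing row $i$ as the permutation $\rho_i\colon x\mapsto L_{i,x}$, two rows $i,j$ and two columns $x,y$ form an intercalate exactly when $\{x,y\}$ is a transposition of the derangement $\rho_j^{-1}\rho_i$; hence $\mathbf N=\sum_{i<j}X_{ij}$, where $X_{ij}$ is the number of transpositions of $\rho_j^{-1}\rho_i$. Since this quotient behaves like a uniform derangement, and a uniform derangement has on average $\tfrac12$ transpositions, one recovers $\mathbb E\mathbf N=(1+o(1))n^2/4$ and, more importantly, one sees $\mathbf N$ as a sum of $\Theta(n^2)$ weakly dependent terms of constant order — which is what makes $\exp(-\Theta(n^2))$ the natural rate for its \emph{lower tail}.

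For the lower tail I would establish exponential concentration by one of two routes. The first is a \emph{switching argument}: for $m$ below the mean, bound the ratio $\Pr(\mathbf N=m)/\Pr(\mathbf N=m+1)$ by a short double count — counting the local ``intercalate-creating'' modifications available in a Latin square with $m$ intercalates against the ways a square with $m+1$ intercalates is produced by such a modification — and telescope, gaining a factor bounded away from $1$ at each of the $\Theta(n^2)$ steps down to the (constant-probability) bulk near $n^2/4$. The second route exposes the square in $\Theta(1/\varepsilon)$ consecutive blocks of $\Theta(\varepsilon n)$ rows and shows that, conditional on the earlier blocks, the number of intercalates within the current block concentrates around its mean $\Theta(\varepsilon^2n^2)$ with an exponentially small lower tail, using that a uniformly random Latin rectangle behaves enough like a family of weakly dependent uniform permutations for a second-moment estimate to apply; summing over the blocks gives $\Pr(\mathbf N\le(1-\varepsilon)n^2/4)\le\exp(-\Omega(n^2))$.

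For the \emph{upper tail} I would use the \emph{high-moment method}: estimate $\mathbb E\binom{\mathbf N}{k}$, the expected number of $k$-element sets of realized potential intercalates (a potential intercalate being a pair of rows together with a pair of columns), and apply Markov's inequality to $\binom{\mathbf N}{k}$ with $k$ taken as large as the estimate allows. A $k$-set of ``spread-out'', essentially disjoint potential intercalates is realized with probability $\approx n^{-2k}$, so such sets contribute roughly $(n^2/4)^k/k!$; Markov with threshold $(1+\varepsilon)n^2/4$ then produces a bound of the form $(1-\Theta(\varepsilon))^k$, which is $\exp(-\Omega(n^{4/3}(\log n)^{2/3}))$ precisely when $k\asymp n^{4/3}(\log n)^{2/3}$. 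The crux — and what pins down this value of $k$ — is to show that ``clustered'' $k$-sets, in which many chosen intercalates share rows, columns, or symbols (the extreme case being a subconfiguration resembling the Cayley table of an elementary abelian $2$-group, which carries $\Theta(m^3)$ intercalates on only $m^2$ cells), do not dominate the moment: each cluster shape is individually far more likely than a generic configuration, but there are correspondingly few of them, and the bookkeeping that balances these two effects rests on sharp bounds for the probability that a prescribed set of cells takes prescribed values in a random Latin square, i.e.\ on the number of completions of partial Latin squares. Carrying out this classification of overlap patterns uniformly up to $k\asymp n^{4/3}(\log n)^{2/3}$ is the main obstacle; its counterpart in the lower-tail argument is that conditioning on part of a Latin square distorts the rest, so that the switching or block estimates must themselves be made robust via the same completion bounds.
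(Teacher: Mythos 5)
Your reduction of \cref{thm:MW} to the two tail bounds is exactly what the paper does (it notes \cref{thm:MW} is a direct corollary of \cref{thm:large-deviations}(a--b)), and your observation that only the upper tail is strictly needed beyond Kwan--Sudakov is fine. But since \cref{thm:MW} is only proved in the paper \emph{through} those tail bounds, the substance of your proposal is your sketches of the tails, and both sketches have genuine gaps. For the lower tail, your second route asserts an ``exponentially small lower tail'' for the block counts ``using\ldots a second-moment estimate'': Chebyshev-type second-moment bounds give only polynomially small failure probabilities, so no amount of summing over $\Theta(1/\varepsilon)$ blocks can produce $\exp(-\Omega(n^2))$; moreover, controlling the conditional distribution of a block given earlier rows is precisely the hard part. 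Your first route (telescoping $\Pr(\mathbf N=m)/\Pr(\mathbf N=m+1)$ by switchings) also does not work as stated: a single swap of two cells in a row changes the \emph{global} intercalate count by as much as $\Theta(n)$ (each added or removed edge lies in up to $n-1$ intercalates), so the count does not move in bounded steps, and no double-counting bound showing that low-intercalate squares admit many controlled intercalate-creating switchings is supplied --- this is exactly the difficulty the paper circumvents with Keevash's completion-counting machinery (\cref{thm:approximation}), the coupling to $\mathbb G^{(3)}(n,\alpha/n)$ (\cref{lem:coupling}), the disjoint-family trick, and Freedman's inequality (\cref{thm:concentration}). Note that the paper's switching lemmas (\cref{lem:star-switching,lem:initial-matching-switching}) are viable only because they count intercalates through a \emph{fixed} row or matching, which caps the change per switching at $2$ or $6$.

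For the upper tail, your high-moment plan hinges on ``sharp bounds for the probability that a prescribed set of cells takes prescribed values in a random Latin square,'' uniformly over clustered configurations of total size up to $k\asymp n^{4/3}(\log n)^{2/3}$. No such bounds are available: the only tool of this kind, the Godsil--McKay estimate (\cref{thm:GM}), applies to Latin \emph{rectangles} with $k$ rows and requires $\Delta\le n-5k$, i.e.\ a thin rectangle with bounded per-row degrees. This is exactly why the paper restricts to the sub-box on the first $\delta^2 n$ rows, columns and symbols, changes measure via the permanent estimates (\cref{thm:permanent}), and replaces the moment computation by the deletion method (\cref{lem:deletion}) plus the star/matching decomposition (\cref{lem:star-matching}) and restricted switchings --- precisely to avoid ever needing joint containment probabilities for large or clustered edge sets. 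You yourself flag the classification of overlap patterns as ``the main obstacle''; since that step is neither carried out nor supported by an available estimate, the upper-tail sketch (and hence the proof of \cref{thm:MW}) is incomplete as it stands.
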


It is natural to draw an analogy to small subgraph counts in random
graphs and hypergraphs. For example, in an Erd\H os--R\'enyi random
graph $\mb{G}(n,p)$, the number of triangles is typically
close to its expected value of $\binom{n}{3}p^{3}$ (as may be proved
with a routine application of Chebyshev's inequality). There is no
obvious way to compute almost any kind of expected value in random
Latin squares, but this point of view at least gives a heuristic explanation
for why one should expect \cref{thm:MW} to hold, as follows. An order-$n$ Latin
square can be equivalently viewed as a 3-partite 3-uniform hypergraph
with parts of size $n$ (corresponding to rows, columns, and symbols), satisfying the property that every pair of
vertices in different parts is included in exactly one (hyper)edge.
In this setting an intercalate is a subgraph isomorphic to a particular
4-edge hypergraph; see \cref{fig:latin}. Now, every Latin square has exactly
$n^{2}$ edges, so by symmetry, in a random Latin square each of the
$n^{3}$ possible edges is present with probability $1/n$. If we
imagine that each of these edges were present with probability $1/n$
\emph{independently}, then the expected number of intercalates would
be $2\binom{n}{2}^{3}(1/n)^{4}=(1+o(1))n^{2}/4$.

There are a huge number of questions about subgraph counts in random
graphs and hypergraphs that have natural analogues for random Latin
squares. One particularly influential direction is the study of \emph{large
deviations}. For example, what is the probability that a random graph
$\mb{G}(n,p)$ has more than twice as many triangles as expected?
What is the probability it has fewer than half as many as expected?
These types of questions have been intensely studied and are intimately related to the development of many important techniques in graph theory and probability theory; see for example the monograph of Chatterjee \cite{Cha17} and the more recent works \cite{Aug20,BB19,BGLZ17,CD20,HMS19}. Beyond \cref{thm:MW}, we are able to prove the following
near-optimal bounds on large deviation probabilities for intercalates
in random Latin squares.
\begin{theorem}\label{thm:large-deviations}
Fix a constant $\delta>0$. Let $\mathbf{N}$ be the number of intercalates
in a uniformly random order-$n$ Latin square $\mathbf{L}$. Then
\begin{enumerate}
\item [(a)]$\Pr(\mathbf{N}\le(1-\delta)n^{2}/4)\le\exp(-\Omega(n^{2}))$,
\item [(b)]$\Pr(\mathbf{N}\ge(1+\delta)n^{2}/4)\le\exp(-\Omega(n^{4/3}(\log n)^{2/3}))$.
\end{enumerate}
Moreover, these bounds are best-possible up to logarithmic factors
in the exponent:
\begin{enumerate}
\item [(c)]$\Pr(\mathbf{N}\le(1-\delta)n^{2}/4)\ge\exp(-O(n^{2}\log n))$ for $\delta\le 1$,
\item [(d)]$\Pr(\mathbf{N}\ge(1+\delta)n^{2}/4)\ge\exp(-O(n^{4/3}\log n))$.
\end{enumerate}
\end{theorem}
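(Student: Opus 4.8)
The guiding heuristic is that $\mathbf{N}=\sum_{\{a,b\}}X_{ab}$, where $X_{ab}$ counts the $4$-cycles occurring as connected components of the bipartite ``two-symbol graph'' $G_{ab}$ on rows and columns whose edge set consists of the cells of $\mathbf{L}$ carrying symbol $a$ or symbol $b$; this is an exact identity, since an intercalate on symbols $\{a,b\}$ is precisely a $4$-cycle component of $G_{ab}$ and conversely. One expects the $\binom{n}{2}$ variables $X_{ab}$ to behave like independent $\mathrm{Poisson}(1/4)$'s, which would make the lower tail of $\mathbf{N}$ decay like $\exp(-\Omega(n^{2}))$ (atypically few intercalates requires many of $\Omega(n^{2})$ roughly independent terms to be atypically small), and would make the upper tail be governed by a few highly correlated terms, i.e.\ by the presence of an intercalate-rich cluster. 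Parts (a) and (b) turn these heuristics into theorems, while (c) and (d) are matching lower bounds on the probabilities.

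Part (c) is immediate: for any fixed order-$n$ Latin square $L_{0}$ one has $\Pr(\mathbf{L}=L_{0})=1/L(n)\ge n^{-n^{2}}=\exp(-O(n^{2}\log n))$, where $L(n)\le(n!)^{n}$ is the number of order-$n$ Latin squares, so it suffices to take $L_{0}$ to be any intercalate-free Latin square---which exists for all $n\notin\{2,4\}$ by classical constructions \cite{KLR75,KT76,McL75}---since for $\delta\le 1$ such an $L_{0}$ has $0\le(1-\delta)n^{2}/4$ intercalates. Part (d) is a \emph{planting} argument: take $L_{0}$ to be the Cayley table of an elementary abelian $2$-group of order $m=\Theta_{\delta}(n^{2/3})$, placed on a fixed $m$-set of rows, columns and symbols; this contains $\Theta(m^{3})$ intercalates (the maximum possible for an order-$m$ Latin square, attained because every pair of rows forms an intercalate with $m/2$ pairs of columns), which for an appropriate $m$ exceeds $(1+\delta)n^{2}/4$, so $\{\mathbf{L}\supseteq L_{0}\}\subseteq\{\mathbf{N}\ge(1+\delta)n^{2}/4\}$. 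It then remains to show $\Pr(\mathbf{L}\supseteq L_{0})\ge\exp(-O(m^{2}\log n))=\exp(-O(n^{4/3}\log n))$; I would prove this ratio bound $\#\{L:L\supseteq L_{0}\}/L(n)\ge\exp(-O(m^{2}\log n))$ by a switching argument, moving between Latin squares with and without the prescribed $m\times m$ subsquare via local modifications and tracking multiplicities, the point being to control the enumeration error to within $O(m^{2}\log n)$ rather than the $o(n^{2})$ afforded by off-the-shelf asymptotics for $L(n)$.

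For part (a) I would bound $\#\{L:\mathbf{N}(L)\le(1-\delta)n^{2}/4\}\le L(n)\exp(-\Omega(n^{2}))$ by a switching argument: given such an $L$, there are $\Omega(n^{2})$ ``missing'' intercalates, i.e.\ positions where a small local modification of $L$ (re-routing a two-symbol cycle through a near-intercalate, or re-colouring the three cells of a suitable symbol-triple in each row) would create an intercalate; one then builds a map from such $L$ to Latin squares with strictly more intercalates and shows its fibres are exponentially (in $n^{2}$) large while its multiplicity is much smaller. This would strengthen the Kwan--Sudakov result, which establishes $\mathbf{N}\ge(1-o(1))n^{2}/4$ only with probability $1-o(1)$. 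The main difficulty is that a single local switch can create and destroy many intercalates at once, so one must exhibit $\Omega(n^{2})$ genuinely decoupled binary choices---the obvious sources (disjoint symbol-triples, two-row configurations, and the like) supply only $O(n)$ of them, so the rigidity of Latin squares is the real obstacle here.

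For part (b) the matching lower bound (d) signals that $\mathbf{N}\ge(1+\delta)n^{2}/4$ should force an intercalate-rich region of size $\Theta(n^{2/3})$, and I would argue accordingly: (i) a supersaturation/stability step showing that if $\mathbf{N}\ge(1+\delta)n^{2}/4$ then $\mathbf{L}$ contains a structured region spanning $O(n^{2/3})$ rows (with associated columns and symbols) inside which $\Omega(n^{2})$ intercalates lie---here one invokes the bound that any set of $s$ cells supports only $O(s^{3/2})$ intercalates, so by convexity the intercalate excess must localise; (ii) a switching-based bound on the probability that a fixed region of given size is this dense; (iii) a union bound over the choices of region, followed by an optimisation over its size, producing the exponent $n^{4/3}(\log n)^{2/3}$ (the extra factor $(\log n)^{1/3}$ relative to (d) reflecting that the argument locates the region only approximately). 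Step (i)---converting a global intercalate excess into a localised combinatorial structure, a Latin-square analogue of the stability phenomena behind upper-tail bounds for subgraph counts in sparse random graphs---is the crux and the principal obstacle of the whole theorem.
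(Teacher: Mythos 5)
Parts (a) and (b) are the substance of this theorem, and neither is actually proved in your proposal. For (a) you sketch a switching map from ``few-intercalate'' Latin squares to intercalate-richer ones and then candidly note that you can only exhibit $O(n)$ decoupled local choices where $\Omega(n^2)$ would be needed; that is precisely the point where the argument dies, and no direct switching analysis of full Latin squares with error $e^{-\Omega(n^2)}$ is known. The paper takes a completely different route: a refined approximation theorem (\cref{thm:approximation}, built on Keevash's completion machinery together with an averaging over random $m$-edge subsets) compares the random Latin square with the first $\alpha n^2$ steps of the triangle removal process at a cost of only $e^{n^{2-\gamma}}$, the removal process is coupled with the binomial random $3$-graph $\mathbb G^{(3)}(n,\alpha/n)$ (\cref{lem:coupling}), and a Lipschitz/Freedman concentration bound (\cref{thm:concentration}) applied to the maximum number of \emph{disjoint} intercalates gives the $e^{-\Omega(n^2)}$ bound there. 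For (b), your step (i) --- that $\mathbf N\ge(1+\delta)n^2/4$ forces a region of $O(n^{2/3})$ rows, columns and symbols containing $\Omega(n^2)$ intercalates --- is false as a deterministic claim: the Cayley table of $(\mathbb Z/2\mathbb Z)^2\times\mathbb Z/m\mathbb Z$ (with $n=4m$, $m$ odd) has $3n^2/4$ intercalates spread perfectly uniformly, and for any choice of $k$ rows, $k$ columns and $k$ symbols the rows pair up under the three involutions and the columns likewise, so the region contains at most $3k^2/4=O(n^{4/3})$ intercalates when $k=O(n^{2/3})$. (A conditional, ``under the tail event the excess localises'' version is exactly the infamous-upper-tail difficulty, not something one may invoke; the auxiliary bound ``$s$ cells support $O(s^{3/2})$ intercalates'' is also asserted without proof and in any case does not yield localisation.) The paper never localises the excess to a sublinear region: it averages down to a box of \emph{linear} size $k=\lfloor\delta^2 n\rfloor$ (where the proportional count $\Theta(k^6/n^4)$ is forced by a trivial averaging argument), then runs the R\"odl--Ruci\'nski deletion method inside that box (\cref{lem:deletion}), decomposes the exceptional edge set into stars and matchings (\cref{lem:star-matching}), and bounds the number of intercalates through any small star or matching by switchings in random $k\times n$ Latin \emph{rectangles} (\cref{lem:star-switching,lem:matching-total}), transferred to Latin squares via the permanent-based comparison \cref{thm:permanent}; the exponent $n^{4/3}(\log n)^{2/3}$ comes from optimising the deletion-set size against the $(k/s)^{\Omega(s)}$ switching tails, not from an $n^{2/3}$-sized region.

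Part (c) is correct and is the paper's proof. Part (d) has the right skeleton (plant the Cayley table of $(\mathbb Z/2\mathbb Z)^q$ of order $\Theta(n^{2/3})$), but the crucial estimate $\Pr(\mathbf L\supseteq L_0)\ge e^{-O(n^{4/3}\log n)}$ is left to an unspecified switching argument ``on Latin squares with and without the prescribed subsquare,'' which is again the kind of direct argument that is not available. The paper obtains this bound by working in random Latin rectangles with $k=\Theta(n^{2/3})$ rows, where the Godsil--McKay switching theorem (\cref{thm:GM}) gives the containment probability $((1+o(1))/n)^{k^2}$, and then transfers to the first $k$ rows of a uniformly random Latin square via \cref{thm:permanent} (Bregman and Egorychev--Falikman), losing only $e^{O(n(\log n)^2)}$. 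Without that rectangle-plus-permanent mechanism (or a genuine substitute), (d) --- and likewise the rectangle-based machinery underlying (b) --- remains unproved in your write-up.
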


Note that \cref{thm:MW} is a direct corollary of \cref{thm:large-deviations}(a--b). Another direct corollary of \cref{thm:large-deviations}(a) is that $\Pr(\mathbf{N}=0)\le\exp(\Omega(-n^{2}))$,
improving McKay and Wanless' aforementioned bound of $\exp(-n^{2-o(1)})$. We remark that the lower tail bound proved by Kwan and Sudakov~\cite{KS18} was of the form $\Pr(\mathbf{N}\le(1-\delta)n^{2}/4)\le\exp(-\Omega(\sqrt n/\log n))$.

From the form of the upper and lower tail probabilities in \cref{thm:large-deviations}, one
can already begin to get an idea for why the upper bound in \cref{thm:MW} is
more difficult than the lower bound. In general, for subgraph counts
in random graphs and hypergraphs, lower tails tend to behave in a
relatively simple ``Gaussian-like'' way, while upper tails tend
to be quite different due to ``clustering'' behaviour (for example,
in some regimes the ``most likely way'' for a random graph to have
a large number of triangles is for it to contain a large clique that
has many triangles on its own). This phenomenon is often referred
to as the ``infamous upper tail'' (see \cite{JR02} for a survey).
In the setting of \cref{thm:large-deviations}, it seems that the ``most likely way'' for
a random Latin square to have a large number of intercalates is for
it to contain a configuration similar to the multiplication table
of an abelian 2-group $(\mb{Z}/2\mb{Z})^{q}$ (which may be
interpreted as a Latin square of order $2^{q}$), for suitably chosen
$q$.

We remark that as a na\"ive approach to try to prove \cref{thm:large-deviations}, we might
try to study the independent random hypergraph model mentioned earlier
(in which each edge is present with probability $1/n$ independently),
and to condition on the (hopefully not too unlikely) event that our
random hypergraph is in fact a Latin square. For example, it is possible
to study large deviations in random regular graphs with a related approach~\cite{BD19,Gun20} (although the details are highly nontrivial). However, the property of being a Latin square
is extremely restrictive, and there does not seem to be any simple
independent model that produces a Latin square with probability
greater than about $(1/\sqrt{n})^{n^{2}}$ (which is vanishingly
small compared to the large deviation probabilities in \cref{thm:large-deviations}(a--b)). Therefore,
we employ some techniques not commonly seen in large deviations theory.

The upper and lower tails in \cref{thm:large-deviations} are handled quite differently. For
the lower tail, we employ the powerful machinery of Keevash (see \cite{Kee14,Kee18,Kee18b,Kee18c})
originally developed for his celebrated proof of the \emph{existence
of designs} conjecture. Using Keevash's machinery, Kwan \cite{Kwa20} developed
a general method for comparing random Latin squares with a stochastic
graph process called the \emph{triangle removal process}. It has been
observed by Simkin \cite{Sim18} that this method is suitable for bounding
lower tail probabilities, but to prove the strong bound in \cref{thm:large-deviations}(a), we
need to refine Kwan's method (introducing an additional averaging
technique).

For the upper tail bound in \cref{thm:large-deviations}(b), instead of working directly with random Latin
squares we work with random \emph{Latin rectangles} (a Latin rectangle
is a $k\times n$ array, for some $k\le n$, filled with the symbols 1 through $n$, such
that every number appears at most once in each row and column). As
observed by McKay and Wanless, we can use estimates on the permanent
(Bregman's theorem~\cite{Bre73} and the Egorychev--Falikman theorem~\cite{Ego81,Fal81}) to compare
random Latin rectangles with random Latin squares. To study random
Latin rectangles we use the method of \emph{switchings} (in which
we study the typical effect of random perturbations to a Latin rectangle),
in connection with a general enumeration theorem of Godsil and McKay~\cite{GM90}
and the so-called \emph{deletion method} of R\"odl and Ruci\'nski (see \cite{RR95,JR04}), adapted to this highly non-independent situation.

\subsection{Further Directions}
There are a few natural questions left open by our work. Let $\mathbf N$ be the number of intercalates in a random order-$n$ Latin square.
\begin{itemize}
    \item Can we improve our understanding of the large deviation probabilities for $\mathbf N$, and sharpen the logarithmic factors\footnote{We note that in the case of triangles in random graphs, it was a longstanding open problem to find the correct logarithmic factor in the exponent of the upper tail probability. This was famously solved by Chatterjee~\cite{Cha12} and DeMarco and Kahn~\cite{DK12}.} in \cref{thm:large-deviations}? In particular, it seems that \cref{thm:large-deviations}(c) is improvable, but the difficulty lies in finding a general way to complete partial Latin squares to Latin squares without introducing too many intercalates. It seems that Keevash's machinery may not be suitable for this, but the more recent approach of ``iterative absorption'' due to Glock, K\"uhn, Lo and Osthus~\cite{GKLO} (see also \cite{DGKLMO20}) may be helpful here.
    \item It would be nice to obtain a more accurate understanding of the expected value $\mathbb E \mathbf N$, and to say more about the distribution of $\mathbf N$ (in particular, it is not even obvious how to estimate the variance of $\mathbf N$). With the ideas in this paper it is possible to find an explicit interval of length $n^{2-\Omega(1)}$ in which $\mathbf N$ typically lies, but we suspect the true behaviour is that $\mathbf N$ has an asymptotic Gaussian distribution with standard deviation $\Theta(n)$.
    \item We have studied $2\times 2$ Latin subsquares; of course it is natural to consider subsquares of higher order. McKay and Wanless~\cite{MW99} conjectured that the expected number of $3\times 3$ Latin subsquares is $1/18+o(1)$ (we would further conjecture that the distribution is asymptotically Poisson with this mean), and they suggested that Latin subsquares of higher order should typically not appear at all. They also proved that $n/2\times n/2$ subsquares are vanishingly unlikely in a random order-$n$ Latin square (this is the largest a proper Latin subsquare could possibly be). We suspect it may not be too hard to show that a typical order-$n$ Latin square does not contain a proper Latin subsquare of order greater than $n^{1/2+\varepsilon}$ (for any constant $\varepsilon>0$), and it would be interesting to go beyond this. Of course, it is also possible to study more general subgraph statistics: for any fixed partial Latin square $H$, we can ask about the number of copies of $H$ in a random Latin square.
    \item A \emph{Steiner triple system} of order $n$ is a 3-uniform hypergraph on a vertex set of size $n$, such that every pair of vertices is included in exactly one edge. These objects are natural ``non-partite'' analogues of Latin squares, and are even more difficult to study (to our knowledge, the only nontrivial results about random Steiner triple systems can be found in \cite{Bab80,Kwa20,FK20,Sim18}). In the setting of Steiner triple systems, the 4-edge hypergraph we have been calling an intercalate is usually called a \emph{Pasch configuration}. Pasch configurations represent the smallest nontrivial ``girth'' obstruction for Steiner triple systems (Erd\H os conjectured that there exist Steiner triple systems with arbitrarily high girth; see \cite{Erd76}), and they provide one of very few ways to ``switch'' between different Steiner triple systems. Simkin~\cite{Sim18} adapted some ideas of Kwan~\cite{Kwa20} to prove that a random Steiner triple system typically has at least $(1-o(1))n^2/24$ Pasch configurations (and the ideas in this paper are suitable for proving near-optimal bounds on the lower-tail probabilities), but due to the ``infamous upper tail'' it will require new ideas to prove a corresponding bound for the upper tail.
    \item We would also like to draw attention to a few other interesting open problems in the area of random Latin squares that are a bit less directly related to the results in this paper. Linial and Luria~\cite{LL16} conjectured that random Latin squares typically satisfy an expansion property closely resembling the expander mixing lemma (see \cite{KS18} for progress on this conjecture) and Cavenagh, Greenhill and Wanless~\cite{CGW08} conjectured that a fixed pair of rows in a random Latin square can be very closely approximated (in some precise sense) by a uniformly random derangement (see also \cite{Cam15} for further discussion).
\end{itemize}

\subsection{Notation}\label{subsec:not}
We use standard asymptotic notation throughout, as follows. For functions
$f=f(n)$ and $g=g(n)$, we write $f=O(g)$
to mean that there is a constant $C$ such that $|f|\le C|g|$,
$f=\Omega(g)$ to mean that there is a constant $c>0$
such that $f(n)\ge c|g(n)|$ for sufficiently large $n$, $f=\Theta(g)$ to mean
that that $f=O(g)$ and $f=\Omega(g)$, and $f=o(g)$ to mean that $f/g\to0$ as $n\to\infty$.
Also, following \cite{Kee14}, the notation $f=1\pm\varepsilon$ means
$1-\varepsilon\le f\le1+\varepsilon$.

We will use the convention that random objects (for example, random variables or random graphs) are printed in bold.

\subsection*{Acknowledgements}
We thank Zach Hunter for pointing out some important typographical errors. We also thank the referee for several remarks which helped improve the paper substantially.

\section{Approximation for random Latin squares}\label{sec:approximation}

In this section we state and prove a refined version of a theorem due to Kwan~\cite{Kwa20} (\cref{thm:approximation}), using machinery due to Keevash~\cite{Kee18c} to approximate a random Latin square with the so-called triangle removal process. This will be the main technical ingredient for the proof of \cref{thm:large-deviations}(a).

To say a bit more about our contribution: Kwan's original approximation theorem (\cite[Theorem~2.4]{Kwa20}) is not capable of proving that any events hold with probability less than $e^{-n}$, so is not sufficient for proving the extremely strong lower tail bound in \cref{thm:large-deviations}(a). Our improvement comes from an averaging/double counting technique (for the reader familiar with the proof of \cite[Theorem~2.4]{Kwa20}, instead of conditioning on an outcome of a random subset of a random Latin square, we average over many subsets). This averaging technique is closely related to the ``distance to hyperplane'' lemma in work of Rudelson and Vershynin~\cite{RV08}, which is ubiquitous in random matrix theory. 

First, we need some definitions, including the (equivalent) hypergraph formulation of a Latin square.

\begin{definition}\label{def:latin-square}
Define
\[
R=R_{n}=\{ 1,\dots,n\} ,\quad C=C_{n}=\{ n+1,\dots,2n\} ,\quad S=S_{n}=\{ 2n+1,\dots,3n\}.
\]
We call the elements of $R$, $C$ and $S$ \emph{rows, columns, }and\emph{
symbols }respectively. A \emph{partial Latin square} (of order $n$)
is a 3-partite 3-uniform hypergraph with 3-partition $R\cup C\cup S$, such that
no pair of vertices is involved in more than one edge. Let $\mathcal{L}_{m}$
be the set of partial Latin squares with $m$ edges. A \emph{Latin
square} is a partial Latin square with exactly $n^{2}$ edges (this
is the maximum possible, and implies that every pair of vertices in different parts is contained in exactly one edge). Let $\mathcal{L}$ be the set of Latin squares.
\end{definition}

\begin{definition}
The (3-partite) \emph{triangle removal process} is defined as follows. Start with
the complete 3-partite graph $K_{n,n,n}$ on the vertex set $R\cup C\cup S$.
At each step, consider the set of all triangles in the current graph,
select one uniformly at random, and remove it. Note that after $m$
steps of this process, the set of removed triangles can be interpreted
as a partial Latin square $L\in\mathcal{L}_{m}$ (unless we run out
of triangles before the $m$th step). Let $\mathbb{L}(n,m)$
be the distribution on $\mathcal{L}_{m}\cup\{ *\} $ obtained
from $m$ steps of the triangle removal process (where ``$*$''
corresponds to the event that we run out of triangles).
\end{definition}

\begin{definition}
Let $\mathcal{T}_{m}\subseteq\mathcal{L}_{m}$ be a property of $m$-edge
partial Latin squares and let $\mathcal{T}\subseteq\mathcal{L}$ be
a property of Latin squares. Say $\mathcal{T}_{m}$ is \emph{$\rho$-inherited}
from $\mathcal{T}$ if for any $L\in\mathcal{T}$, taking $\mathbf{L}_{m}\subseteq L$
as a uniformly random subset of $m$ edges of $L$, we have $\mathbf L_{m}\in\mathcal{T}_{m}$
with probability at least $\rho$.
\end{definition}

Now, our approximation theorem is as follows.

\begin{theorem}\label{thm:approximation}
Let $\alpha \in (0,1/2)$. There is an absolute constant $\gamma>0$ such that the following
holds. Consider $\mathcal{T}_{m}\subseteq\mathcal{L}_{m}$ with $m = \alpha n^2$ and $\mathcal{T}\subseteq\mathcal{L}$
such that $\mathcal{T}_{m}$ is $1/2$-inherited from $\mathcal{T}$.
Let $\mathbf{P}\sim\mathbb{L}(n,m)$ be a partial
Latin square obtained by $m$ steps of the triangle removal process,
and let $\mathbf{L}\in\mathcal{L}$ be a uniformly random order-$n$ Latin
square. Then
\[
\Pr(\mathbf{L}\in\mathcal{T})\le\exp(n^{2-\gamma})\Pr(\mathbf{P}\in\mathcal{T}_m).
\]
\end{theorem}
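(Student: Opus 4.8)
The plan is to reduce, using the inheritance hypothesis, to a comparison between the triangle removal distribution $\mathbb{L}(n,m)$ and the distribution of a uniformly random $m$-edge subset of a uniformly random Latin square, and then to establish that comparison pointwise but only for ``quasirandom'' partial Latin squares. Let $\mathbf{L}$ be a uniformly random order-$n$ Latin square, let $\mathbf{L}_m\subseteq\mathbf{L}$ be a uniformly random set of $m$ of its edges, and for $P\in\mathcal{L}_m$ let $N(P)$ be the number of Latin squares containing $P$, so that $\Pr(\mathbf{L}_m=P)=N(P)/(|\mathcal{L}|\binom{n^2}{m})$. Since $\mathcal{T}_m$ is $1/2$-inherited from $\mathcal{T}$, conditioning on $\mathbf{L}$ gives $\Pr(\mathbf{L}_m\in\mathcal{T}_m)\ge\tfrac12\Pr(\mathbf{L}\in\mathcal{T})$, so it suffices to prove $\Pr(\mathbf{L}_m\in\mathcal{T}_m)\le\exp(n^{2-\gamma})\Pr(\mathbf{P}\in\mathcal{T}_m)$ (with room to shrink $\gamma$ at the end). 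Working with a random $m$-subset \emph{in expectation} — rather than revealing and conditioning on one particular outcome, as in \cite[Theorem~2.4]{Kwa20} — is exactly what lets us get away with controlling only the typical, quasirandom outcomes, and this is what breaks the $e^{-n}$ barrier of Kwan's original argument (it plays the role that averaging over coordinates, rather than worst-casing over them, plays in the Rudelson--Vershynin distance lemma).

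To exploit this, fix a quasirandomness notion: let $\mathcal{Q}_m\subseteq\mathcal{L}_m$ be the set of $P$ for which all relevant extension counts in $K_{n,n,n}\setminus P$ (codegrees of the small configurations that occur in the analysis) are within a factor $1\pm n^{-\Omega(1)}$ of their ideal values. A routine bounded-differences argument shows that for \emph{any} fixed Latin square, a uniformly random $m$-subset of its edges lies in $\mathcal{Q}_m$ with probability $1-\exp(-n^{\Omega(1)})$. Hence $\mathcal{T}_m\cap\mathcal{Q}_m$ is still $(\tfrac12-o(1))$-inherited, in particular $\tfrac13$-inherited, from $\mathcal{T}$, so $\Pr(\mathbf{L}\in\mathcal{T})\le 3\Pr(\mathbf{L}_m\in\mathcal{T}_m\cap\mathcal{Q}_m)=3\sum_{P\in\mathcal{T}_m\cap\mathcal{Q}_m}\Pr(\mathbf{L}_m=P)$. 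It therefore suffices to prove the pointwise bound $\Pr(\mathbf{L}_m=P)\le\exp(n^{2-\gamma})\Pr(\mathbf{P}=P)$ for every $P\in\mathcal{Q}_m$, after which summing over $P\in\mathcal{T}_m\cap\mathcal{Q}_m$ and absorbing the factor $3$ into a slightly smaller $\gamma$ finishes the proof.

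For the pointwise bound we estimate both sides explicitly. On one side, Keevash's counting machinery \cite{Kee18c}, in the form that counts completions of a quasirandom partial design, yields for every $P\in\mathcal{Q}_m$ an estimate $N(P)=\Phi(n,m)\exp(\pm n^{2-\gamma})$ with $\Phi(n,m)$ an explicit quantity independent of $P$, and likewise $|\mathcal{L}|=\Phi(n,0)\exp(\pm n^{2-\gamma})$, so $\Pr(\mathbf{L}_m=P)=\tfrac{\Phi(n,m)}{\Phi(n,0)\binom{n^2}{m}}\exp(\pm n^{2-\gamma})$ uniformly over $P\in\mathcal{Q}_m$. On the other side, since the triangles of a partial Latin square are pairwise edge-disjoint, every ordering of $E(P)$ is a valid history of the removal process and $\Pr(\mathbf{P}=P)=\sum_{\sigma}\prod_{i=1}^m t_i(\sigma)^{-1}$, where $t_i(\sigma)$ is the number of triangles remaining after removing the first $i-1$ triangles in the order $\sigma$ — a quantity depending only on the unordered prefix, not on $P$ or on the future. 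The standard quasirandomness analysis of the triangle removal process shows that for $P\in\mathcal{Q}_m$ a $1-\exp(-n^{\Omega(1)})$ fraction of orderings keep every intermediate state quasirandom, with $t_i=n^3(1-i/n^2)^3\exp(\pm n^{-\Omega(1)})$ throughout, which gives $\Pr(\mathbf{P}=P)\ge\Psi(n,m)\exp(-n^{2-\gamma})$ for an explicit $\Psi(n,m)=m!\prod_{i=1}^m(n^3(1-i/n^2)^3)^{-1}$. Finally one checks $\tfrac{\Phi(n,m)}{\Phi(n,0)\binom{n^2}{m}}\le\Psi(n,m)\exp(n^{2-\gamma})$; this is essentially automatic, because Keevash's counting formula is itself derived from the heuristic of the random greedy (triangle removal) process, so $\Phi(n,m)/\Phi(n,0)$ telescopes into a product over the first $m$ greedy steps, and the factor $\binom{n^2}{m}^{-1}=m!(n^2-m)!/n^2!$ precisely converts from an ordered build of those $m$ triangles to an unordered $m$-subset, matching $\Psi(n,m)$ up to the error.

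The main obstacle is the third paragraph, and within it the step of extracting from Keevash's machinery a completion count $N(P)$ valid for \emph{all} $P\in\mathcal{Q}_m$ with only an $\exp(\pm n^{2-\gamma})$ multiplicative error: one must choose $\mathcal{Q}_m$ so that it is simultaneously typical for random $m$-subsets of an arbitrary fixed Latin square \emph{and} strong enough to serve as a hypothesis for Keevash's counting and completion theorems (this is also where $\alpha<1/2$ is used, keeping $P$ sparse enough for the completion results to apply with room to spare), and then match the resulting product formula against the one coming from the triangle removal process. By contrast, the first two paragraphs are soft: the averaging over all $m$-subsets and the passage to the quasirandom sub-event are exactly what make the scheme go through without needing control of atypical configurations.
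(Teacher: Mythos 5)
Your skeleton matches the paper's strategy in spirit (reduce via $1/2$-inheritance to comparing the law of a random $m$-subset of a random Latin square with the triangle-removal law, restrict to a quasirandom class, use Keevash-based completion counts on one side and a quasirandom analysis of the removal process on the other), but the place where you say ``one checks $\Phi(n,m)/(\Phi(n,0)\binom{n^2}{m})\le\Psi(n,m)\exp(n^{2-\gamma})$; this is essentially automatic'' is precisely the quantitative heart of the theorem, and as written it is a genuine gap. What the available machinery gives you (and what the companion results \cref{lem:count-completions} and \cref{lem:TRP-uniform} are stated as) are \emph{ratio} bounds: completions of two quasirandom partial Latin squares are comparable, and the removal process assigns comparable probability to two quasirandom ordered histories. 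Neither hands you an explicit normalizing constant $\Phi(n,m)$ with uniform error $\exp(n^{2-\gamma})$, let alone an identity matching it against the greedy product $\Psi(n,m)$; appealing to ``Keevash's formula is derived from the greedy heuristic'' is an appeal to the internals of a black box, not an argument. Note also that the naive way to recover the matching by normalization (summing both measures over $\mathcal{Q}_m$) runs into a real obstruction on the removal-process side: to lower bound $\Psi$ against $1/|\mathcal{Q}_m|$ you need an \emph{upper} bound on $\Pr(\mathbf{P}=P)$ per unordered $P$, and orderings with a non-quasirandom prefix cannot be excluded with probability better than roughly $\exp(-n^{\Theta(1)})$, which is far too weak to kill their potential $\exp(O(n^2))$-size overcontribution; one must instead compare only within the class of quasirandom \emph{histories}.

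This is exactly what the paper does, and it is why its proof never computes $\Phi$, $\Psi$, or $|\mathcal{L}|$ at all: it double counts pairs $(L,L_m)$ with $L\in\mathcal{T}$ and $L_m$ an \emph{ordered} quasirandom $m$-edge subset (using \cref{lem:random-ordering} and inheritance for the lower bound, and \cref{lem:count-completions} for the upper bound), which yields $\Pr(\mathbf{L}\in\mathcal{T})\lesssim\exp(n^{2-b})\,|\mathcal{T}_m\cap\mathcal{O}_m^{\varepsilon,h}|/|\mathcal{O}_m^{\varepsilon,h}|$, and then converts this relative density inside the quasirandom class into a removal-process probability via \cref{lem:TRP-uniform} and removes the conditioning via \cref{lem:TRP-quasirandom}. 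In other words, the normalizing constants cancel automatically because everything is phrased as densities within $\mathcal{O}_m^{\varepsilon,h}$, with the ordered formulation doing the bookkeeping that your unordered pointwise comparison forces you to redo by hand. Your first two paragraphs are fine and essentially coincide with the paper's use of inheritance and quasirandomness; to make the third paragraph rigorous you should either carry out the constant-matching via such a double count over quasirandom ordered prefixes (at which point you have reproduced the paper's proof) or supply a genuine derivation of a two-sided completion formula and of the identity with $\Psi(n,m)$, neither of which is off-the-shelf.
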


In our proof of \cref{thm:approximation}, we will need to refer to a number of general-purpose lemmas about random Latin squares and the triangle removal process, each of which essentially appears in \cite{Kwa20}. The lemmas in \cite{Kwa20} were stated in the setting of Steiner triple systems, but the necessary adaptations to the setting of Latin squares are straightforward. For completeness, in the time since the initial version of this paper we have prepared the companion note \cite{KSS21} with self-contained proofs of all the lemmas we will need, explicitly written for Latin squares.

\subsection{Counting completions of partial Latin squares} First, we need the fact that all partial Latin squares satisfying
a certain quasirandomness property extend to a Latin square in a comparable
number of ways. This is proved with the entropy method, the triangle removal process, and Keevash's machinery. First we define our notion of quasirandomness.
\begin{definition}
For this definition we write $V^{1},V^{2},V^{3}$
instead of $R,C,S$ for the three parts of $K_{n,n,n}$. A subgraph $G\subseteq K_{n,n,n}$ with $e(G)$ edges is \emph{$(\varepsilon,h)$-quasirandom}
if for each $i\in\{ 1,2,3\} $, every set $A\subseteq V\setminus V^i$
with $|A|\le h$ has $(1\pm\varepsilon)(e(G)/(3n^2))^{|A|}n$
common neighbours in $V^i$. For a partial Latin square $P\in\mathcal{L}_{m}$,
let $G(P)$ be the graph consisting of those edges of $K_{n,n,n}$
which are not included in some edge of $P$ (so if $m=n^{2}$ then
$G(P)$ is always the empty graph, and if $m=0$ then always $G(P)=K_{n,n,n}$).
Let $\mathcal{L}_{m}^{\varepsilon,h}$ be the set of partial Latin
squares $P\in\mathcal{L}_{m}$ such that $G(P)$ is $(\varepsilon,h)$-quasirandom.
\end{definition}

Second, it is convenient to define a notion of an \emph{ordered} (partial) Latin square.

\begin{definition}
An \emph{ordered} partial Latin square is a partial Latin square $P\in\mathcal{P}_{m}$
together with an ordering on its edge set. Since the triangle removal
process removes triangles sequentially, we can actually interpret
$\mathbb{L}(n,m)$ as a distribution on ordered partial
Latin squares with $m$ edges. Let $\mathcal{O}_{m}^{\varepsilon,h}$
be the set of ordered partial Latin squares $P\in\mathcal{L}_{m}$
such that, for each $i\le m$, writing $P_{i}$ for the partial Latin
square consisting of the first $i$ edges of $P$, the graph
$G(P_{i})$ is $(\varepsilon,h)$-quasirandom.
\end{definition}

Now, our counting lemma is as follows.

\begin{lemma}[{\cite[Lemma~1.6]{KSS21}}]\label{lem:count-completions}
For an ordered partial Latin square $P$ with $m$ edges, let $\mc O^*(P)$ be the set of ordered Latin squares extending $P$ (i.e., whose first $m$ edges are equal to $P$). Fixing a sufficiently large constant $h\in\mb N$ and fixing a constant $a>0$, there is $b=b(a,h)>0$
such that the following holds. Fix a constant $\alpha\in(0,1)$, let
$\varepsilon=n^{-a}$ and $m\le\alpha n^{2}$, and let $P,P'\in\mathcal{L}_{m}^{\varepsilon,h}$. Then
\[
\frac{|\mc O^*(P)|}{|\mc O^*(P')|}\le\exp(O(n^{2-b})).
\]
\end{lemma}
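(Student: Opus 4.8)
The plan is to find a function $F=F(n,m)$, depending only on $n$ and $m$, such that
\[
\log|\mc O^*(P)| = F(n,m) + O(n^{2-b})
\]
for every $P\in\mc L_m^{\varepsilon,h}$ (for an appropriate $b=b(a,h)>0$); the desired bound on $|\mc O^*(P)|/|\mc O^*(P')|$ is then immediate. It is convenient to observe first that the ordering plays essentially no role: if $\mc N(P)$ denotes the number of (unordered) Latin squares whose edge set contains that of $P$, then $|\mc O^*(P)|=(n^2-m)!\cdot\mc N(P)$, so it suffices to estimate $\log\mc N(P)$. The natural candidate for the nontrivial part of $F(n,m)$ is $\sum_{i=m}^{n^2-1}\log\tau_i$, where $\tau_i$ is the number of triangles in an $(\varepsilon,h)$-quasirandom subgraph of $K_{n,n,n}$ with $3n^2-3i$ edges; a short codegree computation from the definition of quasirandomness shows $\tau_i=(1\pm O(\varepsilon))(1-i/n^2)^3n^3$, so $\tau_i$ essentially does not depend on the particular graph.

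For the lower bound I would run the triangle removal process starting from the leftover graph $G(P)$ rather than from $K_{n,n,n}$, and apply the entropy method. The key structural input (one of the general-purpose lemmas from \cite{Kwa20,KSS21}, resting on Keevash's analysis of the triangle removal process) is that quasirandomness propagates: for a suitable slowly-increasing sequence $\varepsilon=\varepsilon_m\le\varepsilon_{m+1}\le\cdots$ that stays $n^{-\Omega(1)}$ throughout the dense part of the process, with probability $1-o(1)$ the leftover graph is $(\varepsilon_i,h)$-quasirandom at every step $i$. Since at each step the process picks a triangle uniformly at random, the conditional entropy of the $i$-th choice given the history is exactly $\log$ of the number of triangles then available, which on the quasirandom event equals $\log\tau_i+O(\varepsilon_i)$; summing, absorbing the $o(1)$-probability failure event with the standard ``conditioning on a likely event'' inequality for entropy, and using $\sum_i\varepsilon_i=O(n^{2-b})$, we obtain the matching lower bound. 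One wrinkle is that the triangle removal process cannot be run all the way to a Latin square --- it typically stalls with $\Theta(n^{3/2})$ edges remaining --- so one in fact runs it only until $\beta n^2$ edges remain for a small constant $\beta$, obtaining (with high probability) a quasirandom partial Latin square, and then counts its completions directly via Keevash's absorption machinery, with the bookkeeping that relates $\mc N(P)$ to this count introducing only a controllable error.

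For the upper bound one can either appeal directly to Keevash's quantitative theorem for counting completions of quasirandom partial designs (in the form of the general lemmas of \cite{KSS21}, ultimately from \cite{Kee18c}), which gives $\log\mc N(P)\le F(n,m)+O(n^{2-b})$ with $F$ depending only on $n$ and $m$, or else argue by downward induction on $m$ via the identity $\mc N(P)=\sum_e\mc N(P+e)$ (the sum over the $\approx\tau_m$ edges $e$ extending $P$), checking that the few edges $e$ for which $P+e$ fails to be quasirandom contribute negligibly. Either way, the main obstacle --- and the reason the error term is $O(n^{2-b})$ rather than merely $o(n^2)$ --- is controlling intermediate configurations that are \emph{not} quasirandom together with the sparse tail of the process where the triangle-removal heuristic breaks down entirely; this is exactly the part that needs Keevash's absorber-based completion results and their quantitative (polynomial-savings) refinements.
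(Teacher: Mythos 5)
This lemma is not proved in the paper itself: it is imported as a black box from the companion note \cite[Lemma~1.6]{KSS21}, with the method summarised as ``the entropy method, the triangle removal process, and Keevash's machinery''---which is exactly the combination of ingredients you propose (reduce to counting unordered completions of the quasirandom leave $G(P)$, upper bound by entropy/Keevash's counting theorem, lower bound by running the removal process from $G(P)$ while quasirandomness propagates, then finishing with Keevash's results), so at the level of detail you give this is essentially the same approach.

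One caution about your lower-bound bookkeeping: for the sparse leftover with $\beta n^2$ cells remaining, existence of a completion via absorption is not enough, because that leftover still contributes a factor of size roughly $\exp(\Theta(\beta n^{2}\log n))$ to $|\mc O^*(P)|$, far larger than the permitted $\exp(O(n^{2-b}))$ slack; to make the dense-phase product match the upper bound you need a genuine \emph{counting} lower bound in the sparse regime (i.e.\ Keevash's quantitative counting-designs theorem \cite{Kee18c}, or the corresponding lemma of \cite{KSS21}, not merely the existence/absorption machinery). You acknowledge this dependence at the end, and indeed the actual proof in \cite{KSS21} leans on precisely that quantitative input, so the gap is one of attribution and precision rather than of strategy; but as written, the phrase ``counts its completions directly via Keevash's absorption machinery'' conflates an existence statement with the counting statement that is the real crux.
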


% \begin{lemma}[{\cite[Lemma~1.6]{KSS21}\footnote{\cite[Lemma~1.6]{KSS21} is stated as a result about the numbers of Latin square ``completions'' of partial Latin squares. This is easily seen to be equivalent.}}]\label{lem:count-completions}
% Fixing a sufficiently large constant $h\in\mb N$ and fixing a constant $a>0$, there is $b=b(a,h)>0$
% such that the following holds. Fix a constant $\alpha\in(0,1)$, let
% $\varepsilon=n^{-a}$ and $m\le\alpha n^{2}$, let $P,P'\in\mathcal{L}_{m}^{\varepsilon,h}$,
% let $\mathbf{L}\in\mathcal{L}$ be a random Latin square, and
% let $\mathbf{L}_{m}$ be a uniformly random set of $m$ edges of
% $\mathbf{L}$. Then 
% \[
% \frac{\Pr(\mathbf{L}_{m}=P)}{\Pr(\mathbf{L}_{m}=P')}\le\exp(O(n^{2-b})).
% \]
% \end{lemma}

\subsection{The triangle removal process}

Next, we need the fact that the triangle removal process produces
every quasirandom partial Latin square with a comparable probability. This follows from the fact that quasirandom graphs have a predictable number of triangles.

\begin{lemma}[{\cite[Lemma~1.7]{KSS21}}]\label{lem:TRP-uniform}
The following holds for any fixed constant $a\in(0,2)$ and $\alpha\in(0,1)$. Let $\varepsilon=n^{-a}$,
let $P,P'\in\mathcal{O}_{\alpha m}^{\varepsilon,2}$ and let $\mathbf{P}\sim\mathbb{L}(n,\alpha m)$.
Then
\[
\frac{\Pr(\mathbf{P}=P)}{\Pr(\mathbf{P}=P')}\le\exp(O(n^{2-a})).
\]
\end{lemma}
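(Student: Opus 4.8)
The plan is to combine the exact formula for the probability that the triangle removal process outputs a prescribed ordered partial Latin square with the fact that an $(\varepsilon,2)$-quasirandom tripartite graph has an almost-deterministic number of triangles. Write $P=(e_1,\dots,e_{\alpha m})$ for the edges of $P$ listed in order, and for $0\le i\le \alpha m$ let $P_i$ be the ordered partial Latin square consisting of the first $i$ edges. After $i$ steps the triangle removal process has deleted precisely the (pairwise edge-disjoint) triangles $e_1,\dots,e_i$, so its current graph is $G(P_i)$; at the $(i+1)$-st step it selects one of the triangles of $G(P_i)$ uniformly at random. Hence, writing $t(G)$ for the number of triangles of a subgraph $G\subseteq K_{n,n,n}$,
\[
\Pr(\mathbf{P}=P)=\prod_{i=0}^{\alpha m-1}\frac{1}{t(G(P_i))},
\]
and the same identity holds with $P'$ in place of $P$. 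The hypothesis $P,P'\in\mathcal{O}_{\alpha m}^{\varepsilon,2}$ guarantees that every $G(P_i)$ and $G(P_i')$ is $(\varepsilon,2)$-quasirandom, and in particular (by the estimate below) has a positive number of triangles, so the process never runs out of triangles and these formulas are valid.

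Next I would establish the triangle estimate: if $G\subseteq K_{n,n,n}$ is $(\varepsilon,2)$-quasirandom with $e(G)$ edges then, writing $d=e(G)/(3n^2)$,
\[
t(G)=(1\pm O(\varepsilon))\,d^3n^3.
\]
To see this, count each triangle once via $\sum|N(u)\cap N(v)\cap V^3|$, the sum being over edges $uv$ of $G$ with $u\in V^1$ and $v\in V^2$. Applying $(\varepsilon,2)$-quasirandomness with $|A|=1$, each vertex of $V^1$ has $(1\pm\varepsilon)dn$ neighbours in $V^2$, so there are $(1\pm\varepsilon)dn^2$ such edges; applying it with $|A|=2$, every pair $u\in V^1$, $v\in V^2$ has $(1\pm\varepsilon)d^2n$ common neighbours in $V^3$. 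Multiplying these and absorbing constants into the $O(\varepsilon)$ (valid since $\varepsilon\to0$) gives the claim.

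Finally I would put these together. Each removed triangle deletes exactly three edges of $K_{n,n,n}$, so $e(G(P_i))=e(G(P_i'))=3n^2-3i$; in particular the density parameter $d_i:=1-i/n^2$ entering the triangle estimate is identical for $P_i$ and $P_i'$. Therefore
\[
\frac{\Pr(\mathbf{P}=P)}{\Pr(\mathbf{P}=P')}=\prod_{i=0}^{\alpha m-1}\frac{t(G(P_i'))}{t(G(P_i))}=\prod_{i=0}^{\alpha m-1}\frac{(1\pm O(\varepsilon))\,d_i^3n^3}{(1\pm O(\varepsilon))\,d_i^3n^3}=(1+O(\varepsilon))^{\alpha m}=\exp\big(O(\varepsilon\,\alpha m)\big)=\exp\big(O(n^{2-a})\big),
\]
using $\varepsilon=n^{-a}$ and $\alpha m=O(n^2)$. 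The only delicate point — and the closest thing here to a ``main obstacle'', though the argument is ultimately routine — is the triangle-count estimate for quasirandom graphs together with the observation that its leading term $d_i^3n^3$ is genuinely the same for $P_i$ and $P_i'$ at each step, so that the entire product collapses to a product of $\alpha m$ factors each of the form $1+O(\varepsilon)$; the remaining manipulations are bookkeeping.
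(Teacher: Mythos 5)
Your proof is correct and follows essentially the same route as the paper's (the lemma is proved in the cited companion note, and the paper's summary --- ``quasirandom graphs have a predictable number of triangles'' --- is exactly your argument): the exact product formula for the process probability, the $(1\pm O(\varepsilon))d^3n^3$ triangle count from $(\varepsilon,2)$-quasirandomness, and the observation that the leading term $d_i^3n^3$ cancels step by step, giving $\exp(O(\varepsilon n^2))=\exp(O(n^{2-a}))$. No gaps worth noting.
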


We also need the fact that the triangle removal process is likely
to produce quasirandom partial Latin squares (and not output $\ast$). This follows from a
very simple and crude analysis (as in \cite[Section~6]{KSS21}). We note that
with modern techniques it is possible to prove a much stronger theorem
(see \cite{BFL15}), but this will not be necessary for our application.
\begin{lemma}[{\cite[Lemma~1.10]{KSS21}}]\label{lem:TRP-quasirandom}
For any constant $h\in\mb N$ there is a constant $a=a(h)\in (0,2)$ such that the following
holds. Fix $\alpha\in(0,1)$, let $m\le\alpha n^{2}$,
let $\varepsilon=n^{-a}$ and let $\mathbf{P}\sim\mathbb{L}(n,m)$.
Then $\Pr(\mathbf{P}\notin\mathcal{O}_{m}^{\varepsilon,h} \emph{ or } \mathbf{P} = \ast)=o(1)$.
\end{lemma}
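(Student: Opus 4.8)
The plan is to carry out a crude version of the differential-equation-method analysis of the triangle removal process, tracking just enough statistics to certify quasirandomness. Write $d_i := 1 - i/n^2 = e(G(\mathbf P_i))/(3n^2)$, valid whenever the process has survived $i$ steps. The first point is that it suffices to show that, with probability $1-o(1)$, the process survives all $m$ steps and $G(\mathbf P_i)$ is $(\varepsilon_i,h)$-quasirandom for every $i\le m$, where $\varepsilon_i$ is a slowly-increasing ``window'' with $\varepsilon_i\le\varepsilon=n^{-a}$ (to be chosen); indeed, any $G\subseteq K_{n,n,n}$ that is $(\varepsilon,2)$-quasirandom with $e(G)\ge(1-\alpha)3n^2$ has at least $(1-\varepsilon)(1-\alpha)^3n^3>0$ triangles, so maintaining quasirandomness up to step $m$ automatically rules out the outcome $\mathbf P=\ast$. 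The base case $i=0$ (where $G(\mathbf P_0)=K_{n,n,n}$) is trivial, so everything reduces to controlling how quasirandomness evolves from one step to the next.

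For each $i\le m$, each $j\in\{1,2,3\}$, and each $A\subseteq V\setminus V^j$ with $1\le|A|\le h$, let $\mathbf{X}_A(i)$ be the number of common neighbours of $A$ in $V^j$ in $G(\mathbf P_i)$; being $(\varepsilon_i,h)$-quasirandom is exactly the statement that $\mathbf{X}_A(i)=(1\pm\varepsilon_i)d_i^{|A|}n$ for all such $(j,A)$. I would analyse one step conditioned on the history $\mathcal{F}_i$, on the event that $G(\mathbf P_i)$ is $(\varepsilon_i,h)$-quasirandom: the $|A|=2$ case of quasirandomness gives $G(\mathbf P_i)$ exactly $(1\pm O(\varepsilon_i))d_i^3n^3$ triangles, so the removed triangle $\mathbf T_{i+1}$ is uniform over this set, and removing it decreases $\mathbf{X}_A$ by the indicator $\boldsymbol\Delta_i\in\{0,1\}$ of the event that the $V^j$-vertex of $\mathbf T_{i+1}$ is a common neighbour of $A$ while $\mathbf T_{i+1}$ meets $A$; counting triangles through a fixed vertex that meet $A$ shows $\mathbb{E}[\boldsymbol\Delta_i\mid\mathcal{F}_i]=(1\pm O(\varepsilon_i))|A|\,\mathbf{X}_A(i)/(d_in^2)$. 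Since $-\tfrac{d}{di}(d_i^{|A|}n)=|A|d_i^{|A|-1}/n$, this is precisely (up to the $\varepsilon_i$-error) the drop needed to keep $\mathbf{X}_A$ on the trajectory $d_i^{|A|}n$. Writing $\mathbf{X}_A(i)=n-\sum_{l<i}\boldsymbol\Delta_l$ and subtracting conditional expectations, the deviation is a martingale with increments in $[-1,1]$ whose sum of conditional variances is only $O(n)$ over the $\le\alpha n^2$ steps (each $\boldsymbol\Delta_l$ has conditional variance $O(1/n)$) — so Freedman's inequality, rather than plain Azuma, is what is needed. It shows $\mathbf{X}_A(i)$ stays within $\Theta(\varepsilon_i d_i^{|A|}n)=\Omega(n^{1-a})$ of $d_i^{|A|}n$ throughout $i\le m$ except with probability $\exp(-\Omega(n^{1-2a}))$, and a union bound over the $O(n^{h+2})$ triples $(i,j,A)$ finishes the proof provided $a<1/2$ (so $a=a(h)$ can in fact be taken to be any small absolute constant).

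To make this precise one runs a stopping-time argument — letting $\boldsymbol\tau$ be the first step at which the process gets stuck or some $\mathbf{X}_A$ exits its window, applying the martingale estimates to the processes stopped at $\boldsymbol\tau$, and concluding $\Pr(\boldsymbol\tau\le m)\le n^{O(1)}\exp(-\Omega(n^{1-2a}))=o(1)$. The main obstacle, as always with this method, is the self-referential bookkeeping: the one-step estimates are only valid while quasirandomness holds, and the $O(\varepsilon_i)$ multiplicative errors in the conditional expectations accumulate over $\Theta(n^2)$ steps, so a fixed window $\varepsilon_i\equiv n^{-a}$ does not close — when $i/n^2\to1$ the accumulated error becomes a constant multiple of $d_i^{|A|}n$, the constant depending on how close $\alpha$ is to $1$. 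The fix is to let the window grow with $i$, e.g. $\varepsilon_i=n^{-a}\big((1-\alpha)/d_i\big)^{Ch}$ for a suitable absolute constant $C$; this is increasing in $i$, satisfies $\varepsilon_i\le n^{-a}$, and grows fast enough that the extra room $\varepsilon_i d_i^{|A|}n$ opened up at each step dominates both the incremental mean error and the martingale fluctuation, so the induction closes.
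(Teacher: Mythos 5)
Your proof is correct: tracking the common-neighbourhood statistics $\mathbf X_A$ along the process, estimating the one-step drift from $(\varepsilon,2)$-quasirandomness (which also supplies the triangle count that rules out $\mathbf P=\ast$), and closing the induction with Freedman's inequality, a stopping time, and a slowly widening error window is exactly the kind of ``simple and crude'' differential-equation/martingale analysis that the paper itself does not reproduce but delegates to the companion note \cite[Section~6]{KSS21}. Your choice $a<1/2$ is comfortably within the allowed range $a=a(h)\in(0,2)$, so no essential difference from the intended argument.
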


\subsection{Randomly ordered Latin squares}

Finally, we need to know that a random ordering of any Latin square is likely to satisfy our quasirandomness property. This follows from a simple Chernoff bound calculation.
\begin{lemma}[{\cite[Lemma~1.8]{KSS21}}]\label{lem:random-ordering}
The following holds for any fixed constants $h\in\mathbb{N}$, $\alpha\in(0,1)$
and $a\in(0,1/2)$. Let $m\le\alpha n^{2}$ and $\varepsilon=n^{-a}$,
consider any Latin square $L$, and let $\mathbf{L}_{m}$ be a
random ordering of a random set of $m$ edges of $L$. Then $\Pr(\mathbf{L}_{m}\notin\mathcal{O}_{m}^{\varepsilon,h})=o(1)$.
\end{lemma}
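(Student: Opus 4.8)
The plan is to fix an index $i\le m$ together with one ``test configuration'' — a choice of one of the three parts $V^i$ and a set $A\subseteq V\setminus V^i$ with $|A|\le h$ — prove a stretched‑exponential bound on the probability that $A$ has the ``wrong'' number of common neighbours in $V^i$ in the graph $G(\mathbf P_i)$, where $\mathbf P_i$ denotes the partial Latin square formed by the first $i$ edges of $\mathbf L_m$, and then finish with a union bound over all such configurations. By exchangeability $\mathbf P_i$ is a uniformly random set of $i$ of the $n^2$ edges of $L$, and since the $i$ hyperedges of $\mathbf P_i$ cover $3i$ distinct edges of $K_{n,n,n}$, the graph $G(\mathbf P_i)$ has exactly $3(n^2-i)$ edges; thus the target density is $p:=e(G(\mathbf P_i))/(3n^2)=1-i/n^2\in[1-\alpha,1]$, and we must control the event that $|N_{G(\mathbf P_i)}(A)|\ne(1\pm\varepsilon)p^{|A|}n$.

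Fix $V^i$ and $A$, and write $k=|A|\le h$. For $s\in V^i$, let $F_s$ be the set of edges of $L$ containing both $s$ and some vertex of $A$; since $L$ is a Latin square there is exactly one edge of $L$ through $s$ and each fixed $v\in A$, so $|F_s|\le k$, and $s\in N_{G(\mathbf P_i)}(A)$ exactly when $F_s\cap\mathbf P_i=\emptyset$. Two facts drive the argument: the sets $F_s$ are pairwise disjoint (each edge of $F_s$ meets $V^i$ only at $s$), and $|F_s|=k$ for all but $O(1)$ of the symbols $s$ — the exceptions being those $s$ for which some hyperedge of $L$ contains $s$ together with two vertices of $A$, of which there are at most $\lfloor k/2\rfloor\lceil k/2\rceil$. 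Writing $\mathbf X_s=\mathbbm{1}[F_s\cap\mathbf P_i=\emptyset]$, a direct computation gives $\mathbb E\mathbf X_s=\binom{n^2-|F_s|}{i}\big/\binom{n^2}{i}=(1+O(1/n))\,p^{|F_s|}$, and hence $\mathbb E\,|N_{G(\mathbf P_i)}(A)|=\mathbb E\sum_{s\in V^i}\mathbf X_s=(1\pm\varepsilon/2)\,p^k n=\Theta(n)$: the $O(1)$ exceptional symbols contribute only $O(1)$, which is negligible against $\varepsilon p^k n=\Theta(n^{1-a})$ since $a<1$.

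For the concentration step I would pass to the independent model in which each edge of $L$ is retained independently with probability $i/n^2$. There the $\mathbf X_s$ are genuinely independent, being functions of the disjoint sets $F_s$, so an ordinary Chernoff bound shows that $\sum_s\mathbf X_s$ differs from its mean by more than $\varepsilon p^k n/4=\Theta(n^{1-a})$ with probability at most $\exp(-\Omega(\varepsilon^2 p^k n))=\exp(-\Omega(n^{1-2a}))$, where we use $a<1/2$. Conditioning this model on the event that exactly $i$ edges are retained recovers the uniform random $i$‑subset, and that event has probability $\Pr(\mathrm{Bin}(n^2,i/n^2)=i)=\Omega(1/n)$ (since $i$ is a mode of this binomial, whose standard deviation is $O(n)$); therefore the same deviation event has probability at most $O(n)\exp(-\Omega(n^{1-2a}))=\exp(-\Omega(n^{1-2a}))$ under the true law of $\mathbf P_i$, so that $G(\mathbf P_i)$ violates the $(\varepsilon,h)$‑quasirandomness condition at $(V^i,A)$ with at most this probability. (Alternatively one can argue directly in the subset model, using that sampling without replacement makes the $\mathbf X_s$ negatively associated.)

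Finally, union‑bounding over all $i\le m$, the three choices of distinguished part, and the $O(n^h)$ sets $A$ with $|A|\le h$ gives $\Pr(\mathbf L_m\notin\mathcal{O}_m^{\varepsilon,h})\le O(n^{h+2})\exp(-\Omega(n^{1-2a}))=o(1)$, since $1-2a>0$ is a fixed constant (indeed this is stretched‑exponentially small). The one point that needs care is the concentration estimate — transferring a Chernoff‑type bound into the sampling‑without‑replacement setting — and this is dispatched by the conditioning argument above, the polynomial loss being harmless against the stretched‑exponential tail; so I do not expect a genuine obstacle here.
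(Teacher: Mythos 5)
Your proposal is correct and follows essentially the route the paper indicates for this lemma (a Chernoff-type concentration bound for the common-neighbourhood counts of each small set $A$ at each step $i$, using that the first $i$ edges form a uniformly random $i$-subset of $L$ and that the relevant hyperedge sets $F_s$ are disjoint, followed by a union bound over $i$, the part $V^i$, and the polynomially many sets $A$, with $a<1/2$ making the stretched-exponential tail beat the polynomial count). Your transfer from sampling with replacement to the uniform $i$-subset (conditioning on the binomial count, or negative association) is exactly the standard "simple Chernoff bound calculation" the paper has in mind, so there is no substantive difference.
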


\subsection{Putting everything together}Finally, we prove \cref{thm:approximation}.
\begin{proof}
[Proof of \cref{thm:approximation}] Let $h$ be as in \cref{lem:count-completions}, let $a=a(h)$ be as in \cref{lem:TRP-quasirandom}, let $\varepsilon=n^{-a}$, and let $b=b(a,h)$ be as in \cref{lem:count-completions}.

Let $N_{\mathrm{pair}}$ be the number of
pairs $(L,L_{m})$ where $L\in\mathcal{T}$ is a Latin square satisfying property $\mc T$, and $L_{m}$
is an ordered partial Latin square consisting of $m$ edges of $L$,
which satisfies\footnote{Here we are abusing notation slightly, because $\mc T_m$ is technically a property of \emph{unordered} partial Latin squares. Here we say an ordered partial Latin square satisfies $\mc T_m$ if its underlying unordered partial Latin square does.} $\mathcal{T}_{m}\cap\mathcal{O}_{m}^{\varepsilon,h}$.
Then 
\[
N_{\mathrm{pair}}\ge(1/2-o(1))n^{2}(n^{2}-1)\dots(n^{2}-m+1)|\mathcal{T}|,
\]
by \cref{lem:random-ordering} and the definition of being $1/2$-inherited.

Let $N_{\mathrm{ext}}=n^{2}(n^{2}-1)\dots(n^{2}-m+1)|\mathcal{L}|/|\mathcal{O}_{m}^{\varepsilon,h}|$
be an upper bound on the average number of ways to extend a partial
Latin square $P\in\mathcal{O}_{m}^{\varepsilon,h}$ to a Latin square.
By \cref{lem:count-completions}, we have
\[
N_{\mathrm{pair}}\le\exp(n^{2-b})|\mathcal{T}_{m}\cap\mathcal{O}_{m}^{\varepsilon,h}|N_{\mathrm{ext}}.
\]
It follows that
\[
\Pr(\mathbf{L}\in\mathcal{T})=\frac{|\mathcal{T}|}{|\mathcal{L}|}\le(2+o(1))\exp(n^{2-b})\frac{|\mathcal{T}_{m}\cap\mathcal{O}_{m}^{\varepsilon,h}|}{|\mathcal{O}_{m}^{\varepsilon,h}|}.
\]
Using \cref{lem:TRP-uniform}, we have
\[
\frac{|\mathcal{T}_{m}\cap\mathcal{O}_{m}^{\varepsilon,h}|}{|\mathcal{O}_{m}^{\varepsilon,h}|}\le\exp(O(n^{2-a}))\Pr(\mathbf{P}\in\mathcal{T}_{m}\,|\,\mathbf{P}\in\mathcal{O}_{m}^{\varepsilon,h}),
\]
and using \cref{lem:TRP-quasirandom}, we have
\[
\Pr(\mathbf{P}\in\mathcal{T}_{m}\,|\,\mathbf{P}\in\mathcal{O}_{m}^{\varepsilon,h})\le\frac{\Pr(\mathbf{P}\in\mathcal{T}_{m})}{\Pr(\mathbf{P}\in\mathcal{O}_{m}^{\varepsilon,h})}=(1+o(1))\Pr(\mathbf{P}\in\mathcal{T}_{m}).
\]
The desired result follows (taking $\gamma<\min\{b,a\}$).
\end{proof}

\section{Latin rectangles}

In this section we recall the notion of a Latin rectangle and some useful facts about them. The results in this section will be used in the proofs of the upper tail bounds \cref{thm:large-deviations}(b,d).

\begin{definition}\label{def:latin-rectangle-1}
A \emph{Latin rectangle} (of order $n$, with $k$ rows) is a $k\times n$ array containing the symbols $1,\dots,n$, such that every symbol appears at most once in each row and column. (So, if $k=n$ this is the same as a Latin square). A \emph{partial} Latin rectangle is a $k\times n$ array satisfying the same property, but where some of the cells are allowed to be empty.
\end{definition}

There is also an equivalent hypergraph formulation of a Latin rectangle.

\begin{definition}\label{def:latin-rectangle-2}
Recall the sets $R=R_n,C=C_n,S=S_n$ from \cref{def:latin-square} and, for $k\le n$, in addition define
\[
R^{(k)}=\{ 1,\dots,k\}\subseteq R.
\]
A \emph{partial Latin rectangle} (of order $n$, with $k$ rows) is
a 3-partite 3-graph with tripartition $R^{(k)}\cup C\cup S$
such that no pair of vertices is involved in more than one edge. A \emph{Latin rectangle} is a partial Latin rectangle with exactly $kn$ edges (which is the maximum possible). Let $\mc Q^{(k)}$ denote the set of all such Latin rectangles (we omit the superscript when $k$ is clear from context). We note that one can similarly define $C^{(k)}=\{n+1,\dots,n+k\}$ and $S^{(k)}=\{2n+1,\dots,2n+k\}$, and symmetrically define a notion of a Latin rectangle with $k$ columns or with $k$ symbols.
\end{definition}

We will switch back and forth between the two equivalent definitions in \cref{def:latin-rectangle-1,def:latin-rectangle-2}, depending on which is more convenient at the time (this will be clear from context).

\subsection{Counting completions of Latin rectangles}
The primary reason Latin rectangles will be important for us is that every Latin rectangle can be completed in roughly the same number of ways to a Latin square. The following lemma is from \cite[Proposition~4]{MW99}. It is proved by iteratively applying Bregman's theorem~\cite{Bre73} and the Egorychev--Falikman theorem~\cite{Ego81,Fal81} to give upper and lower bounds on the number of ways to add an extra row to a given Latin rectangle.
\begin{theorem}\label{thm:permanent}
Let $Q,Q'$ be two Latin rectangles with order $n$ and the same number $k$ of rows. Let $\mathbf L$ be a random Latin square and let $\mathbf L_k$ be the Latin rectangle consisting of its first $k$ rows. Then
\[\frac{\Pr(\mathbf L_k=Q)}{\Pr(\mathbf L_k=Q')}=e^{O(n(\log n)^2)}.\]
\end{theorem}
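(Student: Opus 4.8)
The plan is to turn this into a purely enumerative estimate. Writing $\mathcal{L}$ for the set of order-$n$ Latin squares, uniformity of $\mathbf L$ gives $\Pr(\mathbf L_k = Q) = c(Q)/|\mathcal{L}|$, where $c(Q)$ is the number of Latin squares whose first $k$ rows form $Q$, and similarly for $Q'$. Thus the ratio in the statement is exactly $c(Q)/c(Q')$, and it suffices to produce quantities $M^-_k \le M^+_k$ depending only on $n$ and $k$ such that every order-$n$ Latin rectangle with $k$ rows has between $M^-_k$ and $M^+_k$ completions to a Latin square and $M^+_k/M^-_k = e^{O(n(\log n)^2)}$; then $c(Q)/c(Q')$ and its reciprocal are both at most $M^+_k/M^-_k$, which is the claim.

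To bound $c(Q)$ I would build the completion one row at a time. Given any order-$n$ Latin rectangle with $j$ rows, $k \le j < n$, appending a valid $(j+1)$-st row amounts to choosing a perfect matching between columns and symbols in the bipartite graph $H_j$ in which column $c$ is adjacent to symbol $s$ exactly when $s$ does not already occur in column $c$. Each column of the rectangle holds $j$ distinct symbols and each symbol lies in $j$ distinct columns, so $H_j$ is $(n-j)$-regular, and the number of valid new rows is $\operatorname{per}(A)$ for some $n\times n$ zero-one matrix $A$ all of whose row and column sums equal $n-j$. The Egorychev--Falikman theorem applied to the doubly stochastic matrix $A/(n-j)$ yields $\operatorname{per}(A) \ge (n-j)^n\, n!/n^n$, and Bregman's theorem yields $\operatorname{per}(A) \le ((n-j)!)^{n/(n-j)}$. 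Crucially, neither bound depends on the rectangle, only on $n$ and $j$. Since $c(Q)$ is the product over $j = k, \dots, n-1$ of the corresponding per-step counts, we may take
\[
M^-_k = \prod_{j=k}^{n-1} \frac{(n-j)^n\, n!}{n^n}, \qquad M^+_k = \prod_{j=k}^{n-1} \big((n-j)!\big)^{n/(n-j)}.
\]

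It remains to check $M^+_k/M^-_k = e^{O(n(\log n)^2)}$. Put $t = n-j$, so $t$ ranges over $1 \le t \le n-k \le n$; then
\[
\log\frac{M^+_k}{M^-_k} = \sum_{t} \Big( \tfrac{n}{t}\log(t!) - n\log t - \log(n!) + n\log n \Big).
\]
By Stirling's approximation, $\tfrac{n}{t}\log(t!) = n\log t - n + O(n\log(t+1)/t)$ and $\log(n!) = n\log n - n + O(\log n)$, so each summand is $O(n\log n/t) + O(\log n)$; summing over $1 \le t \le n$ gives $O(n\log n)\sum_{t\le n}1/t + O(n\log n) = O(n(\log n)^2)$, as desired.

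This argument is routine once the two permanent inequalities are available; the real point is recognizing that the Bregman and Egorychev--Falikman bounds for the per-step matching count involve only $n$ and $j$, so the completion counts of $Q$ and $Q'$ telescope against a common scaffold. The only computation is the Stirling estimate in the last step, and the one thing to note is that summing the $O(n\log n/t)$ error over $t$ introduces a harmonic factor $\Theta(\log n)$ — this is exactly why the exponent is $n(\log n)^2$ rather than $n\log n$. (At $t=1$ the Egorychev--Falikman bound is far from tight, contributing a factor $n^n/n! = e^{\Theta(n)}$ to $M^+_k/M^-_k$, but this is comfortably within the stated budget, so no separate treatment is needed.)
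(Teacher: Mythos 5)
Your proposal is correct and is essentially the argument the paper relies on: it cites McKay and Wanless, whose proof likewise iterates the Bregman and Egorychev--Falikman permanent bounds on the $(n-j)$-regular column--symbol bipartite graph to sandwich the number of row-by-row completions between quantities depending only on $n$ and $j$, yielding the $e^{O(n(\log n)^2)}$ ratio. Your Stirling bookkeeping, including the harmonic-sum source of the extra $\log n$ factor, matches the intended computation.
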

\subsection{Subset probabilities in random Latin rectangles}
The following theorem provides estimates on the probability that a given set of entries is present in a random Latin rectangle. It is a direct consequence of a theorem of Godsil and McKay (\cite[Theorem 4.7]{GM90}), and is proved using the switching method.
\begin{theorem}\label{thm:GM}
Let $P$ be a partial Latin rectangle, let $d_{i}(P)$ denote the number of entries of $P$ in row $i$, and let
$\Delta=\max_{1\le i\le k}d_{i}(P)$. Let $\mathbf{L}$
be a uniformly random $k\times n$ Latin rectangle and suppose $\Delta\le n-5k$. Then
\[
\Pr(P\subseteq\mathbf{L})=\left(\frac{1+O(k/(n-2k-\Delta))}{n}\right)^{|P|}.
\]
\end{theorem}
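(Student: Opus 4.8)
The plan is to deduce \cref{thm:GM} directly from the general switching estimate of Godsil and McKay \cite[Theorem~4.7]{GM90}, which already controls the probability that a uniformly random $k\times n$ Latin rectangle contains a prescribed partial Latin rectangle, with an error governed by exactly the parameters $k$ and $\Delta$ and the side-condition $\Delta\le n-5k$ appearing in our statement. So the work is mostly a matter of matching notation and checking that $P$ satisfies their hypotheses; for context I would also recall the switching argument behind it, since that is where the shape of the conclusion comes from.

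Here is the underlying argument. Order the entries of $P$ as $e_1,\dots,e_{|P|}$ and set $P_j=\{e_1,\dots,e_j\}$; since the hypothesis $\Delta\le n-5k$ forces each $P_j$ to extend to a $k\times n$ Latin rectangle (by Hall's theorem), all the events below have positive probability, and
\[
\Pr(P\subseteq\mathbf L)=\prod_{j=0}^{|P|-1}\Pr\bigl(e_{j+1}\in\mathbf L\ \big|\ P_j\subseteq\mathbf L\bigr).
\]
Say $e_{j+1}$ places symbol $s$ in cell $(i,c)$, and let $\mathcal A$ (resp.\ $\mathcal B$) be the set of $k\times n$ Latin rectangles containing $P_j$ that do (resp.\ do not) carry symbol $s$ in cell $(i,c)$, so the $j$-th factor equals $|\mathcal A|/(|\mathcal A|+|\mathcal B|)$. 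Given $L\in\mathcal A$, a \emph{switch} picks a column $c'\ne c$ with $(i,c')\notin P_j$ and exchanges the two symbols of row $i$ lying in columns $c$ and $c'$; the result lies in $\mathcal B$ unless the symbol moved into column $c'$, or the symbol moved into column $c$, collides with an existing entry of that column. Since at most $d_i(P_j)\le\Delta$ columns are blocked by $P_j$ and at most $2(k-1)$ by collisions, each $L\in\mathcal A$ admits at least $n-\Delta-2k$ and at most $n-1$ switches; conversely, from $L'\in\mathcal B$ the column $c'$ is forced (it is the column carrying $s$ in row $i$ of $L'$), so $L'$ is the image of at most one switch. Double counting the switches therefore bounds $|\mathcal B|/|\mathcal A|$ from below, hence the $j$-th factor from above, and a matching lower bound follows once one also bounds the number of exceptional $L'\in\mathcal B$ admitting no switch: a rectangle is exceptional only when $s$ already lies in column $c$ off row $i$, or the displaced symbol already lies in its new column, and each such event fixes a partial Latin rectangle with only a few more entries than $P_j$, so its probability is controlled by the same statement applied inductively to smaller patterns, contributing only $O(k/n)$.

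The main obstacle is squeezing out the sharp error term $1+O(k/(n-2k-\Delta))$ rather than the cruder bound the naive count above produces for the upper estimate on the $j$-th factor: one must argue that the near-regular incidence structure governing $\mathcal A$ and $\mathcal B$ is perturbed only mildly when the $\le\Delta$ entries of a single row are prescribed, which is precisely where the hypothesis $\Delta\le n-5k$ (equivalently $n-2k-\Delta\ge 3k$) is used to keep all the relevant denominators comfortably positive. Since \cite[Theorem~4.7]{GM90} carries out exactly this analysis, it suffices for us to verify that $P$ satisfies its hypotheses, restate the conclusion in the form above, and multiply the per-entry estimates over the $|P|$ entries of $P$.
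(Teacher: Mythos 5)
Your proposal takes exactly the route the paper does: the paper gives no standalone proof of \cref{thm:GM}, but simply records it as a direct consequence of Godsil and McKay's switching-based estimate \cite[Theorem~4.7]{GM90}, which is precisely your plan of matching notation, checking the hypotheses (using $\Delta\le n-5k$), and multiplying the per-entry estimates. Your additional sketch of the underlying switching argument is reasonable context but is not needed, since the citation carries the full weight of the proof, just as in the paper.
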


\section{Lower bounds}
In this section we prove \cref{thm:large-deviations}(c--d), lower-bounding the large deviation probabilities for the number of intercalates $\mathbf N$ in a random Latin square.
\begin{proof}[Proof of \cref{thm:large-deviations}(c)]
As noted in the introduction, for all orders except $2$ and $4$ there is a Latin square with no intercalates (this combines results of Kotzig, Lindner and Rosa~\cite{KLR75}, McLeish~\cite{McL75}, and Kotzig and Turgeon~\cite{KT76}). On the other hand, the total number of order-$n$ Latin squares is clearly at most $n^{n^2}=\exp(O(n^2\log n))$. The desired result follows.
\end{proof}

\begin{proof}[Proof of \cref{thm:large-deviations}(d)]
Let $k=2^q$ be the smallest power of two such that $k\binom k 2/2\ge (1+\delta)n^2/4$. Let $L$ be the Latin square corresponding to the multiplication table of $(\mb Z/2\mb Z)^q$ (where we fix some correspondence between elements of this group and the integers $1,\dots,k$). Then, $L$ has order $k=\Theta(n^{2/3})$, and it is easy to see that it has $k\binom k 2/2\ge (1+\delta)n^2/4$ intercalates (see for example \cite{BCW14}). Let $\mathbf Q\in \mc Q$ be a uniformly random order-$n$ Latin rectangle with $k$ rows. By \cref{thm:GM}, with probability at least $((1-o(1))/n)^{k^2}=\exp(-O(n^{4/3}\log n))$, our special Latin square $L$ appears in the first $k$ columns of $\mathbf Q$.

Let $\mathbf L_r$ be the Latin rectangle consisting of the first $k$ rows of our random Latin square $\mathbf L$. By \cref{thm:permanent}, the probabilities of different outcomes of $\mathbf L_r$ differ by a factor of only $e^{O(n(\log n)^2)}$, so with probability at least $\exp(-O(n^{4/3}\log n+n(\log n)^2))=\exp(-O(n^{4/3}\log n))$, our special Latin square $L$ appears in the first $k$ rows and columns of $\mathbf L$, in which case $\mathbf N\ge (1+\delta)n^2/4$.
\end{proof}

\section{Upper-bounding the lower tail}

In this section we prove \cref{thm:large-deviations}(a). We will apply \cref{thm:approximation} with $\mc T$ being the property of having ``too few'' intercalates. First, we establish that this property is likely to be inherited by random subsets.

\begin{lemma}\label{lem:inherited}
Fix $\alpha,\delta\in [0,1]$, let $\mc T^\delta\subseteq \mc L$ be the property that a Latin square $L\in \mc L$ has at most $(1-\delta)n^2/4$ intercalates, and for $m=\alpha n^2$ let $\mc T^\delta_m\subseteq \mc L_m$ be the property that a partial Latin square $P\in \mc L_m$ has at most $\alpha^4(1-\delta/2)n^2/4$ intercalates. Then $\mc T^\delta_m$ is $1/2$-inherited from $\mc T^\delta$.
\end{lemma}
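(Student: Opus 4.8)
The plan is to condition on the Latin square $L \in \mathcal{T}^\delta$ and analyze the random variable $\mathbf{X}$ counting intercalates of $L$ that survive in $\mathbf{L}_m$, a uniformly random $m$-subset of the $n^2$ edges of $L$. An intercalate of $L$ consists of $4$ edges; it survives in $\mathbf{L}_m$ precisely when all $4$ of its edges are chosen. Since $\mathbf{L}_m$ is a uniform $m$-subset, each intercalate survives with probability $\binom{n^2-4}{m-4}/\binom{n^2}{m} = (1+o(1))\alpha^4$. Hence, writing $N \le (1-\delta)n^2/4$ for the number of intercalates of $L$, linearity of expectation gives $\mathbb{E}\mathbf{X} = (1+o(1))\alpha^4 N \le (1+o(1))\alpha^4(1-\delta)n^2/4$, which is comfortably below the threshold $\alpha^4(1-\delta/2)n^2/4$ by a multiplicative gap bounded away from $1$ (for $n$ large, depending on $\delta$). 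So it suffices to show $\mathbf{X}$ concentrates around its mean well enough that $\Pr(\mathbf{X} > \alpha^4(1-\delta/2)n^2/4) \le 1/2$ — in fact any bound of the form $o(1)$, or even just $<1/2$, will do.

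For the concentration, I would bound the variance of $\mathbf{X}$ and apply Chebyshev. Write $\mathbf{X} = \sum_I \mathbbm{1}[I \subseteq \mathbf{L}_m]$ summed over intercalates $I$ of $L$. Then $\operatorname{Var}(\mathbf{X}) = \sum_{I,I'} \big(\Pr(I \cup I' \subseteq \mathbf{L}_m) - \Pr(I\subseteq\mathbf{L}_m)\Pr(I'\subseteq\mathbf{L}_m)\big)$. For a pair $I, I'$ sharing $j$ edges, $\Pr(I\cup I'\subseteq \mathbf{L}_m) = (1+o(1))\alpha^{8-j}$, so the covariance term is $O(\alpha^{8-j}) - \alpha^8(1+o(1))$; this is $o(1)$ when $j=0$ (negligible negative correlation from sampling without replacement), and $O(\alpha^{8-j})$ when $j \ge 1$. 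The number of intercalates of $L$ is at most $O(n^2)$ (trivially, at most $2\binom n2^2 = O(n^4)$ but in fact any Latin square has $O(n^2)$... actually we only need $N \le (1-\delta)n^2/4 = O(n^2)$, which is given). The key combinatorial input is that the number of ordered pairs $(I,I')$ of intercalates of $L$ sharing at least one edge is $O(n^2)$: each edge of $L$ lies in at most $O(1)$ intercalates — indeed, fixing an edge (a triple (row, column, symbol)) and another row, the symbol in that row and column, together with the symbol's position, is forced, so an edge lies in at most $n-1 = O(n)$ intercalates; combined with $N = O(n^2)$ and careful bookkeeping this gives that pairs sharing an edge number $O(n^3)$, which may not suffice. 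A cleaner route: fix an edge $e$; the intercalates through $e$ are indexed by a choice of a second row (at most $n$ choices), so there are $O(n)$ of them, hence $\sum_{e} (\#\text{intercalates through } e)^2 = O(n^2 \cdot n^2) = O(n^4)$; since pairs sharing $\ge 1$ edge is at most this sum, we get $\operatorname{Var}(\mathbf{X}) = O(\alpha^7 n^4) \cdot$ wait — this would be too large. I should instead observe $\sum_e(\#\text{through }e) = 4N = O(n^2)$ and each term is $O(n)$, so $\sum_e (\#\text{through }e)^2 \le O(n)\sum_e(\#\text{through }e) = O(n^3)$, giving $\operatorname{Var}(\mathbf{X}) = O(n^3)$. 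Then Chebyshev yields $\Pr(|\mathbf{X} - \mathbb{E}\mathbf{X}| > cn^2) \le O(n^3)/(cn^2)^2 = O(1/n) = o(1)$, which is stronger than the $1/2$ we need.

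Putting it together: choose $n$ large enough (depending on $\delta$) that $\mathbb{E}\mathbf{X} + cn^2 \le \alpha^4(1-\delta/2)n^2/4$ where $c$ is chosen so that the gap $\alpha^4(1-\delta/2)/4 - \alpha^4(1-\delta)/4 = \alpha^4\delta/8$ leaves room $c = \alpha^4\delta/16$, say; then $\Pr(\mathbf{X} > \alpha^4(1-\delta/2)n^2/4) \le \Pr(\mathbf{X} - \mathbb{E}\mathbf{X} > cn^2) \le o(1) < 1/2$. Since $\mathbf{L}_m \in \mathcal{T}^\delta_m$ exactly when $\mathbf{X} \le \alpha^4(1-\delta/2)n^2/4$, this shows $\mathbf{L}_m \in \mathcal{T}^\delta_m$ with probability $\ge 1/2$, i.e. $\mathcal{T}^\delta_m$ is $1/2$-inherited from $\mathcal{T}^\delta$. (For small $n$ the statement is vacuous or can be absorbed into constants; and the $\alpha = 0$ edge case is trivial since then $m=0$ and a $0$-edge partial Latin square has $0 \le 0$ intercalates.) The main obstacle is the bookkeeping in the variance bound — specifically verifying that the total number of intercalate-pairs sharing an edge is small enough ($O(n^3)$ suffices against the $\Theta(n^4)$-scale of $(\mathbb{E}\mathbf{X})^2$), which rests on the elementary fact that each edge of a Latin square lies in at most $n-1$ intercalates together with $\sum_e (\#\text{through }e) = 4N = O(n^2)$.
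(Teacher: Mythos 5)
Your proposal is correct and follows essentially the same route as the paper: condition on $L\in\mc T^\delta$, write the number of surviving intercalates as a sum of indicators with mean $(1+o(1))\alpha^4$ each, bound the variance by $O(n^3)$ using that each edge lies in at most $n-1$ intercalates (so only $O(n^3)$ edge-sharing pairs contribute non-negligible covariance), and finish with Chebyshev. The minor hedging about the disjoint-pair covariance is harmless (it is in fact non-positive, and in any case $O(1/n)$), so the argument goes through exactly as in the paper.
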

\begin{proof}
Let $L\in T^\delta$ and let $\mathbf L_m$ consist of $m$ random edges of $L$. Let $\mc I$ be the set of intercalates in $L$, for $I\in \mc I$ let $\mathbf 1_I$ be the indicator random variable for the event that $I\subseteq L_m$, and let $\mathbf X=\sum_{I\in\mc I}\mathbf 1_I$ be the number of intercalates in $\mbf L_m$.
For each $I\in \mc I$ we have $\mb E \mathbf 1_I=\alpha^4+O(1/n)$, so $\mb E \mathbf X\le \alpha^4(1-\delta+o(1))n^2/4$. Also, for each pair of disjoint $I,J\in \mc I$ we have $\on{Cov}(\mathbf 1_I,\mathbf 1_J)=O(1/n)$.
In every Latin square, every intercalate intersects at most $4n$ other intercalates, so there are $O(n^3)$ intersecting pairs of intercalates in $\mc I$, meaning $\on{Var}\mathbf X=O(n^3)$. By Chebyshev's inequality, we conclude that
\[\Pr(\mathbf L_m\in \mc T)=\Pr(\mathbf X<\alpha^4(1-\delta/2)n^2/4)=1-o(1)>1/2,\]
meaning that $\mc T^\delta_m$ is $1/2$-inherited from $\mc T^\delta$.
\end{proof}

Before we continue with the proof we record some auxiliary lemmas.

\subsection{A coupling lemma}
It is not very easy to study the triangle removal process directly, so the following coupling lemma is useful in combination with \cref{thm:approximation}. Let $\mb G^{(3)}(n,p)$
be the random 3-partite 3-graph on the vertex set $R\cup C\cup S$
obtained by including all possible edges with probability $p$
independently.
\begin{lemma}[{\cite[Lemma~1.9]{KSS21}\footnote{The statement of \cite[Lemma~1.9]{KSS21} is for a monotone \emph{increasing} property $\mc P$; to derive the statement here we simply take $\mc P$ to be the complement of $\mc T$.}}]\label{lem:coupling}
Let $\mathcal{T}$ be a property of unordered partial Latin squares
that is monotone decreasing in the sense that $P\in\mathcal{T}$ and
$P'\subseteq P$ implies $P'\in\mathcal{T}$. Fix $\alpha\in(0,1)$, let $\mathbf{P}\sim\mb L(n,\alpha n^{2})$, let $\mathbf{G}\sim\mb G^{(3)}(n,\alpha/n)$
and let $\mathbf{G}^{*}$ be the partial Latin square obtained from $\mathbf{G}$ by
deleting (all at once) every edge which intersects another edge in more than one
vertex. Then 
\[
\Pr(\mathbf{P}\in\mathcal{T})\le O(\Pr(\mathbf{G}^{*}\in\mathcal{T})).
\]
\end{lemma}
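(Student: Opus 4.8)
The plan is to couple the triangle removal process with $\mb{G}^{(3)}(n,\alpha/n)$ by generating both from a single family of independent Poisson clocks, and then to extract a genuine multiplicative $O(1)$ comparison --- rather than merely an additive one --- from this coupling by conditioning on the number of edges of $\mathbf{G}$.

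First I would pass to a ``random greedy'' description of $\mathbb{L}(n,m)$. A triangle of $K_{n,n,n}$ is the same object as a triple in $R\times C\times S$, i.e.\ a potential edge of a partial Latin square, and removing a triangle is the same as adding such an edge provided it shares at most one vertex with every edge added so far. So $\mathbb{L}(n,m)$ may be generated as follows: attach to each of the $n^3$ potential edges an independent rate-$1$ Poisson clock, let $T_e$ be the first ring time of edge $e$, process the edges in increasing order of $T_e$, and greedily accept each edge into a growing partial Latin square whenever it conflicts (shares two vertices) with nothing accepted so far --- outputting $\ast$ if we reach a maximal partial Latin square with fewer than $m$ edges. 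The first $m$ accepted edges then have exactly the law $\mathbb{L}(n,m)$, because at each acceptance the edge accepted is uniform over the currently available (conflict-free) edges: an available edge has necessarily not yet been processed, and the not-yet-processed edges are in uniformly random relative order. Now fix $s:=-\log(1-\alpha/n)$ and set $\mathbf{G}:=\{e:T_e\le s\}$; the $T_e$ being independent, $\mathbf{G}\sim\mb{G}^{(3)}(n,\alpha/n)$, and $\mathbf{G}^{*}$ is obtained from $\mathbf{G}$ by deleting all conflicting edges.

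The core of the proof is two containments. Let $\mathbf{P}_s$ be the set of edges accepted by time $s$. (i) $\mathbf{G}^{*}\subseteq\mathbf{P}_s$: if $e\in\mathbf{G}^{*}$ then $e$ conflicts with no other edge of $\mathbf{G}$, so when $e$ is processed it conflicts with nothing accepted so far (the accepted edges all lie in $\mathbf{G}$), hence $e$ is accepted. (ii) On the event $\{|\mathbf{G}|<\alpha n^2\}$: since $\mathbf{P}_s\subseteq\mathbf{G}$ we get $|\mathbf{P}_s|<\alpha n^2$, so the edges of $\mathbf{P}_s$ are among the first $\alpha n^2$ accepted, i.e.\ $\mathbf{P}_s\subseteq\mathbf{P}$ (vacuous if $\mathbf{P}=\ast$). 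Hence, on $\{|\mathbf{G}|<\alpha n^2\}$ we have $\mathbf{G}^{*}\subseteq\mathbf{P}$, and as $\mathcal{T}$ is monotone decreasing, $\{\mathbf{P}\in\mathcal{T}\}\subseteq\{\mathbf{G}^{*}\in\mathcal{T}\}$ on that event. To convert this into the stated bound I would condition on $|\mathbf{G}|$: the permutation in which the clocks first ring --- which alone determines $\mathbf{P}$ --- is independent of the order statistics of $(T_e)$, which alone determine $|\mathbf{G}|$; this is the classical independence of the ranks and the order statistics of an i.i.d.\ continuous sample. So for each integer $k<\alpha n^2$, conditioning on $|\mathbf{G}|=k$ does not change the law of $\mathbf{P}$, while the coupling still gives $\mathbf{G}^{*}\subseteq\mathbf{P}$, so $\Pr(\mathbf{G}^{*}\in\mathcal{T}\mid|\mathbf{G}|=k)\ge\Pr(\mathbf{P}\in\mathcal{T}\mid|\mathbf{G}|=k)=\Pr(\mathbf{P}\in\mathcal{T})$. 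Summing over $k<\alpha n^2$,
\[
\Pr(\mathbf{G}^{*}\in\mathcal{T})\ \ge\ \Pr(|\mathbf{G}|<\alpha n^2)\cdot\Pr(\mathbf{P}\in\mathcal{T})\ \ge\ \tfrac14\,\Pr(\mathbf{P}\in\mathcal{T})
\]
for all large $n$, the last step being the central limit theorem for $|\mathbf{G}|\sim\mathrm{Bin}(n^3,\alpha/n)$, whose mean is $\alpha n^2$ (small $n$ is absorbed into the implied constant). This gives $\Pr(\mathbf{P}\in\mathcal{T})\le4\,\Pr(\mathbf{G}^{*}\in\mathcal{T})$.

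The step I expect to need the most care is this final one. Read naively, the coupling gives only $\Pr(\mathbf{P}\in\mathcal{T})\le\Pr(\mathbf{G}^{*}\in\mathcal{T})+\Pr(|\mathbf{G}|\ge\alpha n^2)$, whose error term is of constant order and hence useless, since $\Pr(\mathbf{G}^{*}\in\mathcal{T})$ may itself be as small as $e^{-\Theta(n^2)}$. It is the conditioning on $|\mathbf{G}|=k$ with $k<\alpha n^2$, together with the independence of the processing order from $|\mathbf{G}|$, that upgrades the additive error into the required multiplicative factor. The remaining ingredients --- the equivalence of the clock description with $\mathbb{L}(n,m)$, the two containments, and the central limit estimate --- are routine.
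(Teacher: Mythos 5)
Your argument is correct: the Poisson-clock (equivalently, uniform-random-ordering) generation of the triangle removal process, the containment $\mathbf{G}^{*}\subseteq\mathbf{P}$ on $\{|\mathbf{G}|\le\alpha n^{2}\}$, and the rank/order-statistic independence that turns the additive error into a multiplicative constant are all sound, and you correctly identify the last step as the one needing care. This is essentially the same coupling proof as the one the paper defers to in the companion note \cite{KSS21} (and its antecedents in \cite{FK20,Kwa20}), so nothing further is needed.
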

We remark that one can prove a similar coupling lemma for monotone increasing properties (see \cite[Lemma~2.6]{FK20}), though this will not be necessary for us.

\subsection{A concentration inequality}
The following concentration inequality may be deduced from an inequality of Freedman~\cite{Fre75}. It appears as \cite[Theorem~2.11]{Kwa20}.
\begin{theorem}\label{thm:concentration}
Let $\boldsymbol\omega=(\boldsymbol \omega_1,\dots,\boldsymbol\omega_N)$ be a sequence of independent, identically distributed random variables with $\Pr(\boldsymbol\omega_i=1)=p$ and $\Pr(\boldsymbol\omega_i=0)=1-p$. Let $f:\{0,1\}^N\to \mb R$ satisfy the Lipschitz condition $|f(\boldsymbol \omega)-f(\boldsymbol\omega')|\le K$ for all pairs $\bs \omega,\bs \omega'\in \{0,1\}^N$ differing in exactly one coordinate. Then
\[\Pr(|f(\bs \omega)-\mb E f(\bs \omega)|>t)\le \exp\left(-\frac{t^2}{4K^2Np+2Kt}\right).\]
\end{theorem}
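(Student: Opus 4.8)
To prove \cref{thm:concentration}, the plan is to realize $f(\bs\omega)$ as the terminal value of a Doob (coordinate‑exposure) martingale and then invoke Freedman's martingale Bernstein inequality~\cite{Fre75}; essentially all of the content lies in reading off, from the combinatorial Lipschitz hypothesis, the two analytic inputs that inequality requires (a bound on the increments and a bound on the predictable quadratic variation).

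Concretely, I would let $\mc F_k=\sigma(\bs\omega_1,\dots,\bs\omega_k)$ and set $M_k=\mb E[f(\bs\omega)\mid\mc F_k]$, so that $(M_k)_{k=0}^N$ is a martingale with $M_0=\mb E f(\bs\omega)$ and $M_N=f(\bs\omega)$; write $\bs\Delta_k=M_k-M_{k-1}$. Conditional on $\mc F_{k-1}$, the variable $M_k$ depends only on $\bs\omega_k$: it equals $a:=\mb E[f\mid\mc F_{k-1},\,\bs\omega_k=1]$ with probability $p$ and $b:=\mb E[f\mid\mc F_{k-1},\,\bs\omega_k=0]$ with probability $1-p$. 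Coupling the later coordinates $\bs\omega_{k+1},\dots,\bs\omega_N$ identically and applying the one‑coordinate Lipschitz bound inside the expectation gives $|a-b|\le K$. Since $M_{k-1}=pa+(1-p)b$, it follows immediately that $|\bs\Delta_k|\le K$ and that the predictable quadratic variation satisfies
\[
\sum_{k=1}^N\mb E[\bs\Delta_k^2\mid\mc F_{k-1}]=\sum_{k=1}^N p(1-p)(a-b)^2\le NpK^2
\]
surely (not merely in expectation). The key point is that the conditional variance is at most $pK^2$, not the trivial $K^2$ — this is exactly what replaces the ``$N$'' of a plain Azuma bound by ``$Np$'' in the exponent.

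With these two inputs, Freedman's inequality applied to $M_N-M_0$ (with increment bound $K$ and quadratic‑variation bound $v=NpK^2$, which holds surely) yields $\Pr\!\big(f(\bs\omega)-\mb E f(\bs\omega)>t\big)\le\exp\!\big(-t^2/(2NpK^2+\tfrac23Kt)\big)$. Running the same argument with $-f$ in place of $f$ — which has the identical Lipschitz constant and the same Bernoulli structure — controls the lower tail, and a union bound over the two events, combined with the crude estimates $\tfrac23Kt\le 2Kt$ and $2NpK^2\le 4NpK^2$ (whose slack absorbs the factor of $2$ from the union bound), gives the two‑sided bound of \cref{thm:concentration}. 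I do not expect a genuine obstacle here: the only step requiring care is the extraction of the $pK^2$ conditional‑variance bound from the one‑coordinate Lipschitz hypothesis, and the remainder is a black‑box application of Freedman's inequality together with routine bookkeeping of constants.
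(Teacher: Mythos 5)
Your derivation is the one the paper intends: the paper gives no proof of \cref{thm:concentration}, merely noting that it may be deduced from Freedman's inequality \cite{Fre75} and citing \cite{Kwa20}, and your Doob-martingale argument --- increments $|\bs\Delta_k|\le|a-b|\le K$, conditional variances $p(1-p)(a-b)^2\le pK^2$, hence predictable quadratic variation at most $NpK^2$ surely, followed by Freedman's inequality for each of $\pm f$ and a union bound --- is exactly that deduction, and all of these estimates are correct.

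The one step that does not hold as written is the last one: the claim that weakening $2K^2Np$ to $4K^2Np$ and $\tfrac23Kt$ to $2Kt$ absorbs the union-bound factor $2$. That absorption is valid only when the Freedman exponent $t^2/(2K^2Np+\tfrac23Kt)$ is at least an absolute constant (about $2\ln 2$ suffices), and it fails for small $t$. In fact the statement as quoted, with no leading constant, is false in that regime: for $N=1$, $p=1/2$, $f(\bs\omega)=\bs\omega_1$, $K=1$ and $t=0.4$, the left-hand side equals $1$ while the right-hand side is $\exp(-0.16/2.8)<1$. So this is a (harmless) defect of the quoted statement rather than of your approach: one should either retain a leading factor $2$ or restrict to $t$ for which the exponent is bounded below --- and in the paper's only application the exponent is $\Omega(n^{2})$, so there your absorption step is legitimate. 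With that caveat recorded, your proof is correct and is essentially the paper's.
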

\subsection{Putting everything together}We are now ready to prove \cref{thm:large-deviations}(a).
\begin{proof}[Proof of \cref{thm:large-deviations}(a)]
Let $\alpha>0$ be some constant that is sufficiently small with respect to $\delta$, let $m=\alpha n^2$, let $\mathbf G,\mathbf G^*$ be as in \cref{lem:coupling}, and let $\mc T^\delta_m$ be the property that a partial Latin square (not necessarily with exactly $m$ edges) has at most $(1-\delta/2)\alpha^4n^2/4$ intercalates. We bound $\Pr(\mbf G^* \in \mc T^\delta_m)$ using a ``maximum disjoint family'' technique essentially due to Bollob\'as~\cite{Bol88}. Let $\mathbf N$ be the number of intercalates in $\mathbf G^*$, let $\mathbf N'$ be the maximum size of a collection of disjoint intercalates in $\mathbf G^*$, and let $\mathbf N_2$ be the number of pairs of distinct intercalates in $\mathbf G$ which share an edge. Observe that $\mathbf N'\ge \mathbf N-\mathbf N_2$.

Now, we estimate  $\mb E\mathbf N$. There are $2\binom n 2^3$ ways to specify an intercalate, and each is present in $\mathbf G^*$ with probability $(\alpha/n)^4(1-\alpha/n)^{12(n-1)-8}$.
So,
\[\mb E \mathbf N=(e^{-12\alpha}+o(1))\alpha^4 n^2/4.\]
For $\mb E \mathbf N_2$, we observe that there are $O(n^7)$ ways to specify a pair of intercalates that share two edges, and each such pair is present in $\mathbf G$ with probability $(\alpha/n)^6$. There are $2\binom{n}{2}^3\cdot 4(n-2)^3$ ways to specify an ordered pair of intercalates that share one edge, and each such pair is present in $\mathbf G$ with probability $(\alpha/n)^7$.
So,
\[\mb E\mathbf N_2=(\alpha^3+o(1))\alpha^4 n^2.\]
If $\alpha$ is sufficiently small (in terms of $\delta$) then $\mb E \mathbf N'\ge \mb E \mathbf N-\mb E\mathbf N_2\ge \alpha^4(1-\delta/4)n^2/4$.

We next claim that $\mathbf N'$ is a 3-Lipschitz function of the edges of the random hypergraph $\mathbf G$. Indeed, adding an edge to $\mathbf G$ can increase $\mathbf N'$ by at most one, and removing an edge from $\mathbf G$ can increase $\mathbf N'$ by at most three (by adding up to three edges to $\mathbf G^*$). So, by \cref{thm:concentration} we have
\[\Pr(\mathbf G^*\in \mc T^\delta_m)\le \Pr(\mathbf N'\le \mb E\mathbf N'-\alpha^4\delta n^2/16)\le \exp(-\Omega(n^2)).\]
It follows from \cref{lem:coupling} that if $\mathbf P\sim\mb{L}(n,\alpha n^2)$ then $\Pr(\mathbf P\in \mc T^\delta_m)\le\exp(-\Omega(n^2))$. The desired result follows from \cref{lem:inherited} and \cref{thm:approximation}.
\end{proof}

\section{Upper-bounding the upper tail}
In this section we prove \cref{thm:large-deviations}(b). We will work mostly with random Latin rectangles, and use \cref{thm:permanent} to transfer our results to random Latin squares. Recall that we defined two equivalent notions of a Latin rectangle (\cref{def:latin-rectangle-1,def:latin-rectangle-2}); we will use both perspectives in this section.

\subsection{Deletion}
The first step in the proof of \cref{thm:large-deviations}(b) is to adapt the deletion method of R\"odl and Ruci\'nski (see \cite{RR95,JR04}), using \cref{thm:GM}, to reduce to the case where one has a small subset of edges which contributes a large number of the intercalates. To effectively apply \cref{thm:GM}, for now we restrict our attention to a small number of rows, columns and symbols.

\begin{lemma}\label{lem:deletion}
Fix a sufficiently small constant $\delta>0$, and let $k=\lfloor\delta^2 n\rfloor$. Let $\mbf L$ be a uniformly random order-$n$ Latin square, and let $\mbf L^{(k)}$ be the subhypergraph induced by $R^{(k)}\cup C^{(k)}\cup S^{(k)}$ (that is to say, $\mbf L^{(k)}$ consists of the entries in the first $k$ columns and the first $k$ rows, involving the first $k$ symbols). Then with probability $1-\exp(-\Omega(n^{4/3}(\log n)^{2/3}))$ there is a set $\mbf E_0\subseteq \mbf L^{(k)}$ of $n^{4/3}(\log n)^{2/3}$ edges such that $\mbf L^{(k)}\setminus \mbf E_0$ contains at most $(1+\delta/2)k^6/(4n^4)$ intercalates.
\end{lemma}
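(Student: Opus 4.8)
The plan is to analyze the small induced sub-rectangle $\mathbf L^{(k)}$ using \cref{thm:GM}, treating it essentially as a sparse random structure and running the standard Rödl--Ruci\'nski deletion argument. First, I would set things up carefully: $\mathbf L^{(k)}$ is a partial Latin rectangle living on $R^{(k)}\cup C^{(k)}\cup S^{(k)}$ with $k=\lfloor\delta^2 n\rfloor$, and the ``entries'' of $\mathbf L^{(k)}$ are exactly those edges of the random Latin square $\mathbf L$ whose row, column and symbol all lie in the first $k$ of each. Since the number of entries in any row of such a partial configuration is at most $k\le n-5k$ for $\delta$ small, \cref{thm:GM} applies and tells us that for any fixed partial Latin rectangle $P$ supported on these coordinates, $\Pr(P\subseteq \mathbf L^{(k)}) = ((1+O(k/n))/n)^{|P|} = ((1+O(\delta))/n)^{|P|}$. (Strictly, to invoke \cref{thm:GM} cleanly I would first pass from $\mathbf L$ to the random $k\times n$ Latin rectangle $\mathbf L_k$ via \cref{thm:permanent} at the end, or equivalently restrict the random Latin rectangle to its first $k$ columns and first $k$ symbols; either way the $\Theta(n(\log n)^2)$ transfer factor is negligible compared to $\exp(-\Omega(n^{4/3}(\log n)^{2/3}))$.)

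Next I would do the first- and second-moment bookkeeping for intercalates inside $\mathbf L^{(k)}$. An intercalate supported on the first $k$ rows, columns and symbols is specified by choosing $2$ rows, $2$ columns and $1$ symbol from among the first $k$ (and then the four entries are forced, up to a factor $2$ for the two ways to place the symbols), so there are $2\binom k2^2 \binom k1 \cdot(\text{const}) = \Theta(k^5)$ candidate intercalates, each an edge-set of size $4$; by \cref{thm:GM} each appears with probability $((1+O(\delta))/n)^4$. Hence, writing $\mathbf M$ for the number of intercalates in $\mathbf L^{(k)}$, we get $\mathbb E\mathbf M = (1+O(\delta))\,k^6/(4n^4)$ — matching the target $(1+\delta/2)k^6/(4n^4)$ once $\delta$ is small and up to the constant $1/2$ slack. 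Let $\mathbf Z$ be the number of (unordered) pairs of distinct intercalates in $\mathbf L^{(k)}$ sharing at least one edge; such a pair is determined by at most $k^7\cdot O(1)$ choices and has probability at most $((1+O(\delta))/n)^7$ when the pair shares one edge (so its union has $7$ edges) and is controlled by a union over $O(k^7)$ terms of size $\le(O(1/n))^6$ for the ``share two edges'' case, giving $\mathbb E\mathbf Z = O(k^7/n^7) = O(\delta^{14} n^0)\cdot$(bounded) $= O(1)$ — in fact $\mathbb E\mathbf Z$ is a constant (or $o(k^6/n^4)=o(n^2)$), much smaller than $n^{4/3}(\log n)^{2/3}$. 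The deletion principle then says: if we delete one edge from each intersecting pair we kill all of $\mathbf Z$'s witnesses, leaving a set of edge-disjoint intercalates; more to the point, after removing a set $\mathbf E_0$ of edges hitting all intersecting pairs, the remaining intercalates are ``spread out'', and the count of remaining intercalates is at most $\mathbf M$, which we now concentrate.

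For the concentration of $\mathbf M$ (and of the sizes controlling $\mathbf E_0$), the natural route is a large-deviation bound via a switching/exposure martingale on the random Latin rectangle, but since \cref{thm:GM} already gives us all the joint small-subset probabilities we need, the cleanest approach is: bound $\Pr(\mathbf M \ge (1+\delta/2)k^6/(4n^4))$ directly by a moment or a Janson-type inequality. The issue is the ``infamous upper tail'' — a plain second moment will not give $\exp(-\Omega(n^{4/3}(\log n)^{2/3}))$. This is precisely why we use deletion: we do \emph{not} try to show $\mathbf M$ itself is concentrated; instead we show that with probability $1-\exp(-\Omega(n^{4/3}(\log n)^{2/3}))$ the number of edges that lie in ``many'' intercalates is small, delete exactly those edges as $\mathbf E_0$, and then the surviving intercalates number at most (target). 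Concretely, call an edge $e$ \emph{heavy} if it is contained in at least $t:=\Theta((\log n)/?)$ intercalates of $\mathbf L^{(k)}$; by \cref{thm:GM} the expected number of edges in $r$ common intercalates decays geometrically, so the expected number of heavy edges, and more importantly the probability that there are more than $n^{4/3}(\log n)^{2/3}$ edges each of ``weight'' exceeding the average-ish threshold, is $\exp(-\Omega(n^{4/3}(\log n)^{2/3}))$ — this is the standard Rödl--Ruci\'nski deletion tail computation with the exponent $n^{4/3}(\log n)^{2/3}$ coming from the number of potential edges ($\Theta(k^3)=\Theta(\delta^6 n^3)$, but restricted to $k$ columns/symbols it is $\Theta(n^{?})$...) balanced against the per-edge probability $1/n$; I expect the exponent to fall out as $\big(\tfrac{\#\text{intercalates}}{\#\text{edges}}\big)\cdot(\#\text{edges deleted})\cdot\log(\cdots) \asymp n^{4/3}(\log n)^{2/3}$ after optimizing the threshold $t$. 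Once the set of heavy edges has size $\le n^{4/3}(\log n)^{2/3}$, set $\mathbf E_0$ to be this set; every remaining intercalate uses only light edges, so the number of surviving intercalates is at most (number of light edges)$\cdot t/4 \le k^3 t/4$, and choosing $t$ as the optimized threshold makes this at most $(1+\delta/2)k^6/(4n^4)$.

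\textbf{Main obstacle.} The delicate point is getting the \emph{exact} exponent $n^{4/3}(\log n)^{2/3}$ rather than something weaker: this requires optimizing the heaviness threshold $t$ against the deletion-tail probability, using \cref{thm:GM} to control the probability that a \emph{given} set of $s$ edges are simultaneously present and each spans many intercalates — and crucially handling the dependence that $\mathbf L^{(k)}$ is not a product measure, so the ``deletion method'' tail bound of Rödl--Ruci\'nski must be re-derived in the form ``for any family $\mathcal F$ of edge-sets, $\Pr(\text{the witnessing graph has no large matching-free deletion set of size} \le s)$ is small,'' with all probabilities supplied by \cref{thm:GM} and a union bound over the $\binom{\Theta(k^3)}{s}$ choices of deletion set. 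I would expect the rest — the first-moment estimate $\mathbb E\mathbf M=(1+O(\delta))k^6/(4n^4)$, the bound $\mathbb E\mathbf Z=o(n^2)$, the reduction from Latin squares to Latin rectangles via \cref{thm:permanent}, and the bookkeeping that surviving intercalates $\le k^3 t/4$ — to be routine.
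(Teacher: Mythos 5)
Your high-level framing (pass to a random $k\times n$ Latin rectangle via \cref{thm:permanent}, use \cref{thm:GM} for all subset probabilities, then run a R\"odl--Ruci\'nski-type deletion argument) matches the paper, but the specific deletion mechanism you propose does not work quantitatively, and the step that actually produces the exponent $n^{4/3}(\log n)^{2/3}$ is exactly the step you leave unresolved (your threshold ``$t:=\Theta((\log n)/?)$'' and the ``Main obstacle'' paragraph). Your plan is to delete all \emph{heavy} edges (those in at least $t$ intercalates) and bound the surviving count by $(\text{number of light edges})\cdot t/4$. The number of edges of $\mathbf{L}^{(k)}$ is about $k^3/n=\delta^6n^2$, while the target is $(1+\delta/2)k^6/(4n^4)=\Theta(\delta^{12}n^2)$, i.e.\ only a $(1+\delta/2)$ factor above the typical intercalate count. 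So your bound forces $t\le(1+\delta/2)\delta^6<1$: you would have to delete \emph{every} edge lying in any intercalate, which is typically $\Theta(\delta^{12}n^2)\gg n^{4/3}(\log n)^{2/3}$ edges. (Your bound ``$\le k^3t/4$'' with $k^3=\delta^6n^3$ is even lossier.) The ``edges times threshold'' accounting is simply too crude when the allowed excess is a constant factor; no choice of $t$ rescues it. There are also smaller slips: an intercalate uses two symbols, so there are $2\binom k2^3=\Theta(k^6)$, not $\Theta(k^5)$, candidates; and the expected number of edge-sharing pairs of intercalates inside $\mathbf{L}^{(k)}$ is $\Theta(k^9/n^7)=\Theta(\delta^{18}n^2)$, not $O(1)$, so it is not ``much smaller than $n^{4/3}(\log n)^{2/3}$'' (this paragraph is not central to your plan, but the claim as stated is false).

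The paper's argument avoids all of this with a different, and much shorter, deletion mechanism, in which the exponent falls out of the size of the deletion set with no optimization at all. Let $r=n^{4/3}(\log n)^{2/3}$, $N=(1+\delta/2)k^6/(4n^4)$, $\kappa=\lfloor r/4\rfloor$. If \emph{no} set of $r$ edges works, then one can greedily choose $\kappa$ pairwise disjoint intercalates with at least $N$ choices at each step (the at most $4\kappa\le r$ edges used so far form a candidate deletion set, so at least $N$ intercalates avoid them); hence the number $Z$ of $\kappa$-sequences of disjoint intercalates satisfies $Z\ge N^{\kappa}$. On the other hand, by \cref{thm:GM} each fixed $\kappa$-sequence of disjoint intercalates is present with probability at most $((1+O(\delta^2))/n^4)^{\kappa}$, and there are $(1+o(1))k^6/4$ candidate intercalates, so $\mathbb{E}Z\le((1+\delta/3)k^6/(4n^4))^{\kappa}$. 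Markov then gives failure probability at most $((1+\delta/3)/(1+\delta/2))^{\kappa}=\exp(-\Omega(r))$, and \cref{thm:permanent} transfers this to the random Latin square since $\exp(O(n(\log n)^2))$ is negligible. This disjoint-sequences-plus-Markov step is precisely what is missing from your write-up: without it (or some correct substitute), your proposal does not yield the stated bound.
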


\begin{proof}
We first note that this event is purely a function of the first $k$ rows of $\mbf L$, and therefore by \cref{thm:permanent} it suffices to prove the same event for a uniformly random order-$n$ Latin rectangle $\mbf Q\in \mc Q$ with $k$ rows (as the relative change of measure $\exp(O(n(\log n)^2))$ is simply swallowed into the error term). Let $\mbf Q^{(k)}$ be the subhypergraph of $\mbf Q$ induced by $R^{(k)}\cup C^{(k)}\cup S^{(k)}$.

Now let $\mathcal A$ be the event that the desired property fails (i.e., for each set $E\subseteq \mbf Q^{(k)}$ of size $r=n^{4/3}(\log n)^{2/3}$, the partial Latin rectangle $\mbf Q^{(k)}\setminus E$ contains at least $N=(1+\delta/2)k^6/(4n^4)$ intercalates). Let $Z$ be the number of $\kappa=\lfloor r/4\rfloor$-element sequences of disjoint intercalates in $\mbf Q^{(k)}$. If $\mathcal A$ holds, then $Z\ge N^\kappa$, since we may choose $\kappa$ intercalates sequentially with at least $N$ choices each time.
On the other hand, there are $2\binom k 2^3=(1+o(1))k^6/4$ potential intercalates that can appear in $\mbf Q^{(k)}$, and by \cref{thm:GM} each $\kappa$-element sequence of disjoint intercalates appears in $\mbf Q$ with probability at most $((1+O(\delta^2))/n^4)^{\kappa}\le((1+\delta/4)/n^4)^{\kappa}$ (for small $\delta$). So, $\mb{E} Z\le \left((1+\delta/3)k^6/(4n^4)\right)^\kappa$ by linearity of expectation, and by Markov's inequality it follows that $\Pr(\mathcal A)\le \mb E Z/N^{\kappa}\le \exp(-\Omega(n^{4/3}(\log n)^{2/3}))$.
\end{proof}

\subsection{A combinatorial decomposition}
Given \cref{lem:deletion}, we now wish to understand the probability that there is a small set of edges participating in many intercalates. To make this analysis tractable, we need a lemma decomposing any set of edges into well-behaved subsets.
\begin{definition}
A \emph{star} is a hypergraph all of whose hyperedges contain a common vertex.
A \emph{matching} is a hypergraph all of whose hyperedges are disjoint.
\end{definition}

\begin{lemma}\label{lem:star-matching}
For any $r\in \mb N$, every $3$-uniform hypergraph with $m$ hyperedges can be partitioned into a combination of at most $m/r$ stars and at most $3r+m/r$ matchings, each of which have at most $r$ edges.
\end{lemma}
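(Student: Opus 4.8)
The plan is to build the decomposition greedily, peeling off stars as long as some vertex has high degree, and then handle the low-degree leftover by a simple greedy matching argument. Concretely, suppose $H$ is a $3$-uniform hypergraph with $m$ edges. As long as the current hypergraph has a vertex $v$ of degree at least $r$, I would remove $r$ of the edges through $v$, calling this a star (with exactly $r$ edges), and repeat. Each such removal deletes exactly $r$ edges, so this phase produces at most $m/r$ stars, as required, and it terminates with a subhypergraph $H'$ in which every vertex has degree at most $r-1 < r$.

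The second phase handles $H'$, which has $m' \le m$ edges and maximum degree $< r$. I want to cover $H'$ by at most $3r + m/r$ matchings, each with at most $r$ edges. First, by a standard greedy/Vizing-type argument for hypergraph edge colouring: process the edges of $H'$ one at a time; each edge $e=\{x,y,z\}$ conflicts (shares a vertex) with fewer than $3(r-1)$ previously placed edges (at most $r-1$ through each of $x,y,z$, minus the overcounting, but $3(r-1)$ suffices as an upper bound), so there is always a colour among $3r-2$ colours available. This partitions $H'$ into at most $3r-2 \le 3r$ matchings. However these matchings may have more than $r$ edges, so I then split each one: a matching with $t$ edges is broken into $\lceil t/r\rceil \le 1 + t/r$ matchings each of size at most $r$. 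Summing $\lceil t_i/r \rceil$ over the at most $3r$ colour classes gives at most $3r + \sum_i t_i / r = 3r + m'/r \le 3r + m/r$ matchings in total, each of size at most $r$. Together with the at most $m/r$ stars, this is the claimed decomposition.

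The one point requiring a little care is the edge-colouring bound in the second phase: I should make sure the constant is genuinely $3r$ (not, say, $3r+O(1)$ in a way that breaks the stated bound). Since in $H'$ every vertex has degree at most $r-1$, an edge $e$ meets at most $3(r-1) = 3r-3$ other edges, so $3r-2$ colours always suffice and in fact $3r-2 \le 3r$; alternatively one can just colour $H'$ greedily with exactly $3r$ colours from the start and never need to split based on colour-count. I do not expect any genuine obstacle here — the lemma is a routine greedy decomposition — but the bookkeeping to land exactly on "$m/r$ stars" and "$3r + m/r$ matchings, each of size $\le r$" is the only thing to be careful about, particularly the interplay between the bound on the \emph{number} of colour classes ($3r$) and the splitting of oversized classes (contributing the extra $m/r$).
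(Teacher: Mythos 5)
Your proposal is correct and is essentially the paper's own argument: greedily peel off $r$-edge stars while some vertex has degree at least $r$ (at most $m/r$ stars), then greedily properly edge-colour the remaining max-degree-$<r$ hypergraph with at most $3r$ colours and split oversized colour classes, contributing at most $m/r$ extra matchings. The bookkeeping you flag works out exactly as in the paper, so there is nothing to add.
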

\begin{proof}
As long as there is a vertex incident to $r$ edges, take $r$ of these edges as a star (we obtain at most $m/r$ stars in this way). After no more deletions are possible, we now have a $3$-uniform hypergraph with all degrees less than $r$. We can greedily find a proper edge-colouring of this hypergraph with at most $3(r-1)+1\le 3r$ colours. Each of the colour classes is a matching. Finally, arbitrarily decompose the matchings into sub-matchings each with at most $r$ edges, which introduces at most $m/r$ new matchings.
\end{proof}
\subsection{Switching for stars and matchings}
By applying \cref{lem:star-matching} to the set $\mbf E_0$ provided by \cref{lem:deletion}, it now suffices to bound the probability that there is a small star or matching which participates in many intercalates. We will handle both cases separately, with similar switching-based proofs (in random Latin rectangles; afterwards we will use \cref{thm:permanent} to deduce a result for random Latin squares). Our application of the switching method will be rather simple and completely elementary, but we remark that Fack and McKay~\cite{FM07} and Hasheminezhad and McKay~\cite{HM10} have proved very general theorems with which one can analyse more complicated switching operations.

First, the following lemma will be used to handle stars (note that in the context of Latin rectangles, a star is a set of entries corresponding to a single row, column or symbol).
\begin{lemma}\label{lem:star-switching}
Let  $k\le n/10$, and let $\mbf{Q}\in \mc Q$ be a uniformly random order-$n$ Latin rectangle with $k$ rows. Let $N(\mbf Q)$ be the number of intercalates in $\mbf Q$ which involve the first row. Then $\Pr(N(\mbf Q)\ge t)\le (k/t)^{\Omega(t)}$ for $t\ge 20k$.
\end{lemma}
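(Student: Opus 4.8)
\textbf{Proof plan for \cref{lem:star-switching}.}
The plan is to bound $N(\mbf Q)$ by a switching argument: starting from a Latin rectangle $Q$ with at least $t$ intercalates through its first row, we perform a local perturbation that destroys at least one such intercalate, and we compare the number of ways to do this (forwards versus backwards) to conclude that rectangles with many first-row intercalates are rare. Concretely, an intercalate through the first row is determined by a second row $i$, two columns $x,y$, and a symbol pair, with $Q_{1,x}=Q_{i,y}$ and $Q_{1,y}=Q_{i,x}$. The switching I would use is the familiar $2\times 2$ ``intercalate switch'' applied to the two cells $(1,x),(1,y)$ of the \emph{first row} together with two other cells chosen to keep the result a valid Latin rectangle; more precisely, I would pick an intercalate through the first row and a column $z$ (disjoint from the chosen intercalate) and swap the entries of the first row in columns $x$ and $z$ (say), simultaneously fixing up the unique conflict this creates in the column $z$ (or in row $i$), so that the operation removes the intercalate while changing only $O(1)$ entries. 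The key point is that each such move is ``local'': it touches only the first row (and one other row/column to repair validity), so an intercalate not meeting those cells is preserved, and we can iterate.

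The core of the argument is the counting inequality. Fix $j\ge t$. Let $\mathcal{Q}_j$ be the set of order-$n$ $k$-row Latin rectangles with exactly $j$ intercalates through the first row. From $Q\in\mathcal{Q}_j$, the number of (labelled) forward switches is at least $j\cdot(n-O(k))=\Omega(jn)$: we choose one of the $j$ intercalates and then a column $z$ avoiding the $O(1)$ columns already used, and the repair is forced. Going backwards, from any $Q'$ obtained this way, I would bound the number of rectangles $Q$ that could have produced it: the inverse move is also a local swap involving the first row and one repair row/column, so the number of preimages is at most $O(n)\cdot(\text{something like the max multiplicity of intercalates per cell})$. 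Since each entry of the first row lies in at most $k$ intercalates through the first row (one for each choice of the ``partner'' row $i$, up to $O(1)$ factors), the number of backward switches is $O(nk)$. This yields
\[
|\mathcal{Q}_j|\cdot \Omega(jn)\le |\mathcal{Q}_{<j}|\cdot O(nk),
\]
where $\mathcal{Q}_{<j}$ is the set of rectangles with fewer than $j$ first-row intercalates landed in after one switch, so $|\mathcal{Q}_j|\le O(k/j)\,|\mathcal Q|$. Summing this geometric-type bound over $j\ge t$ (and using $t\ge 20k$ so that the ratio $O(k/j)\le 1/20$, or iterating the switch $\Omega(t)$ times to reach a rectangle with, say, at most $10k$ first-row intercalates) gives $\Pr(N(\mbf Q)\ge t)\le (O(k)/t)^{\Omega(t)}$, as desired; by \cref{thm:GM} the ``many switches'' iteration is controlled since the relevant subset probabilities are all $(1+o(1))/n$ per entry.

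The main obstacle I anticipate is making the switching operation genuinely \emph{invertible} and \emph{local} at the same time: a naive $2\times2$ intercalate swap in the first row may create more than one conflict elsewhere (in row $i$ \emph{and} in the swapped columns), and repairing all of them could cascade. The cleanest fix is probably to only swap in the first row and one \emph{other} row that is currently ``generic'' with respect to the chosen intercalate (there are $\ge k-O(1)\ge 1$ such rows since $k\le n/10$, though for $k=O(1)$ one instead swaps within columns/symbols using the symmetric formulations from \cref{def:latin-rectangle-2}), so that exactly one conflict is created and its repair is forced and reversible. One must also verify that distinct forward switches from the same $Q$ lead to distinct $Q'$, or else account for the multiplicity; this is where keeping the move to $O(1)$ entries and recording the ``type'' of the move is essential. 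Finally, to get the clean bound for all $t\ge 20k$ (not just large $t$) one iterates the switch: each application decreases the first-row intercalate count by at least one while the number of available moves stays $\Omega(jn)$ as long as $j\gtrsim k$, giving the $\Omega(t)$ in the exponent. Throughout, \cref{thm:GM} (with $\Delta\le n-5k$, valid since $k\le n/10$) supplies the uniform $(1+o(1))/n$ per-entry control needed to turn the counting inequality into the stated probability bound.
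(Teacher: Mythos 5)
Your overall strategy (a switching argument comparing the sizes of the classes $\mc Q(\ell)$ of rectangles with exactly $\ell$ first-row intercalates, then iterating a ratio bound) is the right one and matches the paper's, but your specific switching and the resulting counting have genuine gaps. The operation you propose modifies entries of the \emph{first} row and then ``repairs'' the resulting conflict; you yourself flag that the repair may cascade, and your proposed fix (swap with a ``generic'' other row) is never pinned down as a concrete, invertible operation, so neither the forward count nor the preimage count can actually be verified. The paper sidesteps this entirely: the switch picks a row $i\in R^{(k)}\setminus\{1\}$ and two columns $x,y$ and swaps the two entries of row $i$, simply discarding any switch that fails to produce a Latin rectangle (no repairs at all); one then checks that each first-row intercalate is destroyed by at least $2(n-2k)$ valid switches, while at most $kn$ switches can create a first-row intercalate.

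More seriously, switching in the first row destroys the structural fact that makes the recursion iterate. An entry \emph{outside} the first row lies in at most one first-row intercalate, so the paper's switch changes $N$ by at most $2$; hence the double count relates $|\mc Q(\ell)|$ only to $|\mc Q(\ell-1)|$ and $|\mc Q(\ell-2)|$, giving $|\mc Q(\ell)|\le (4k/\ell)\max\{|\mc Q(\ell-1)|,|\mc Q(\ell-2)|\}$ for $\ell\ge 10k$, which can be iterated $\Omega(\ell)$ times to yield $(k/t)^{\Omega(t)}$ already for $t\ge 20k$. A first-row entry, by contrast, can lie in up to $k-1$ first-row intercalates, so your swap can change $N$ by $\Theta(k)$ in a single step, and distinct intercalates sharing a first-row cell can induce the \emph{same} switch, so your forward count of distinct switches is only $\Omega(jn/k)$ rather than $\Omega(jn)$; after these corrections the class ratio becomes $O(k^2/j)$, which only bites for $t\gtrsim k^2$, missing the stated threshold $t\ge 20k$. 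In addition, the inequality you actually write, $|\mathcal Q_j|\cdot\Omega(jn)\le|\mathcal Q_{<j}|\cdot O(nk)$ with $\mathcal Q_{<j}$ consisting of \emph{all} smaller classes, only gives $\Pr(N(\mbf Q)=j)\le O(k/j)$ per class, which summed over $j\ge t$ is far from $(k/t)^{\Omega(t)}$; the exponential bound comes precisely from the bounded-change recursion described above, which your switching does not provide. Finally, the appeal to \cref{thm:GM} is a red herring here: the probability in question is just $|\mc Q(\ell)|/|\mc Q|$, and no per-entry subset probabilities are needed for this lemma.
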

\begin{proof}
Let $\mc Q(\ell)\subseteq \mc Q$ be the set of Latin rectangles $Q\in \mc Q$ for which there are exactly $N(Q)=\ell$ intercalates involving the first row.

Consider the following switching operation: select a row $i\in R^{(k)}\setminus\{1\}$ (i.e., not the first row) and a pair of columns $x,y\in C$, and swap the contents of columns $x$ and $y$ in row $i$. Note that it is possible that the resulting $k\times n$ array is no longer a Latin rectangle (columns $x$ and $y$ may now contain a repeated symbol). We next compute some upper and lower bounds on the number of ways to switch from a Latin rectangle $Q\in \mc Q(\ell)$ to a Latin rectangle $Q\in \mc Q(\ell')$, for $\ell\ne \ell'$.

In the hypergraph formulation of a Latin rectangle, our switching introduces two new edges and removes two edges. In a Latin rectangle, every edge outside the first row participates in at most one intercalate involving the first row, so it is only possible to switch from $\mc Q(\ell)$ to $\mc Q(\ell')$ if $|\ell-\ell'|\le 2$.

Next, we observe that for any Latin rectangle $Q\in \mc Q$, there are at most $(k-1)n\le kn$ switchings which create an intercalate involving the first row. Indeed, first note that swapping entries in columns $x$ and $y$ of a given row can never create an intercalate involving the first row and both $x$ and $y$. Then, for every column $z$, we consider the number of switchings that create an intercalate involving $z$ without actually swapping an entry in column $z$. Such an intercalate must involve one of the $k-1$ rows other than the first, and for each such row $i$, there is at most one switching that actually creates the desired intercalate (in row $i$, we must swap the column $x$ satisfying $Q_{i,x}=Q_{1,z}$ with the column $y$ satisfying $Q_{1,y}=Q_{i,z}$).

Now, given a Latin rectangle $Q\in \mc Q(\ell)$, there are $\ell$ intercalates involving the first row. Note that each such intercalate is destroyed by at least $2(n-2k)$ switchings which maintain the Latin rectangle property. Indeed, consider one of the two edges of the intercalate not in the first row (in row $i$, column $x$ and symbol $s$, say). There are at most $k$ columns which already include $s$, and at most $k$ columns whose symbol in row $i$ already appears in column $x$. For any of the (at least $n-2k$) other rows $y$, we can swap columns $x$ and $y$ in row $i$ to destroy the desired intercalate. Now, the intercalates involving the first row are edge-disjoint outside of this first row, but a given switching could remove two different intercalates at once (or destroy an intercalate by interchanging its entries outside the first row). So, there are a total of at least $\ell(n-2k)$ distinct switchings which maintain the Latin rectangle property and remove an intercalate. We have just observed that at most $kn$ of these switchings also introduce an intercalate, so for any $\ell\ge 2$ we deduce
\[\big(\ell(n-2k)-kn\big)\,|\mc Q(\ell)|\le kn\,(|\mc Q(\ell-1)|+|\mc Q(\ell-2)|).\]
This implies $|\mc Q(\ell)|\le(4k/\ell)\max\{|\mc Q(\ell-1)|,|\mc Q(\ell-2)|\}$ for $\ell\ge 10k$. Iterating this, we see that for $\ell\ge 20k$ we have
\begin{align*}|\mc Q(\ell)|&\le \frac{4k}\ell\cdot\frac{4k}{\ell-2}\cdot\dots\cdot\frac{4k}{\ell-2\lceil(\ell-10k-2)/2\rceil}\cdot \max\{|\mc Q(10k)|,|\mc Q(10k-1)|\}\le\bigg(\frac{k}{\ell}\bigg)^{\Omega(\ell)}|\mc Q|.
\end{align*}
(To justify the second inequality, note that at least $\ell/4$ terms in the product are at most $8k/\ell$).
This implies that $\Pr(N(\mbf Q)=\ell)\le (k/\ell)^{\Omega(\ell)}$, and the desired result follows by summing over $\ell\ge t$.
\end{proof}

To handle matchings, we first use a similar switching argument to handle intercalates which are ``mostly disjoint'' from the vertices of the matching, other than the necessary included edge. Note that in the context of Latin rectangles, a matching is a set of entries such that no pair shares a row, column or symbol. Such a set is also called a \emph{partial transversal}.
\begin{lemma}\label{lem:initial-matching-switching}
Let $k\le n/10$ and let $\mbf{Q}\in \mc Q$ be a uniformly random order-$n$ Latin rectangle with $k$ rows. Fix a set $M$ of $r=k/6$ disjoint triples in $R^{(k)}\times C\times S$ (which may or may not appear as edges in $\mbf Q$). Say an intercalate is \emph{good} if it includes one of the triples in $M$ as an edge, and its other three vertices are completely disjoint from the vertices in $M$. Let $N(\mbf Q)$ be the number of good intercalates in $\mbf Q$. Then $\Pr(N(\mbf Q)\ge t)\le (k/t)^{\Omega(t)}$ for $t\ge 20k$.
\end{lemma}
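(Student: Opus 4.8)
The plan is to mimic the switching proof of \cref{lem:star-switching}, but now with the matching $M$ playing the role that ``the first row'' played there. Fix $M$ and, for each $\ell$, let $\mc Q_M(\ell)\subseteq \mc Q$ be the set of order-$n$ Latin rectangles with exactly $\ell$ \emph{good} intercalates (good with respect to $M$). I would like to show $|\mc Q_M(\ell)|\le (k/\ell)^{\Omega(\ell)}|\mc Q|$ for $\ell\ge 20k$, from which the conclusion follows by summing the resulting bound $\Pr(N(\mbf Q)=\ell)\le (k/\ell)^{\Omega(\ell)}$ over $\ell\ge t$.

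The switching operation I would use is the same as before: pick a row $i\in R^{(k)}$ and two columns $x,y\in C$, and swap the entries of $Q$ in cells $(i,x)$ and $(i,y)$; this introduces two edges and deletes two. First I would check this changes the number of good intercalates by at most an absolute constant, so that we only ever switch between $\mc Q_M(\ell)$ and $\mc Q_M(\ell')$ with $|\ell-\ell'|=O(1)$. The point is that a good intercalate uses exactly one edge of $M$ (which a switching can create or destroy at most $O(1)$ of, since the $r$ triples of $M$ are disjoint so a fixed row meets at most one of them) and otherwise its three non-$M$ vertices are \emph{disjoint} from $M$; an edge of $Q$ lies in at most one good intercalate ``through a given $M$-triple'', because the $M$-triple together with one further edge in a prescribed position determines the intercalate. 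Next, as in \cref{lem:star-switching}, I would bound the number of switchings from a fixed $Q$ that \emph{create} a good intercalate by $O(kn)$: a created good intercalate must use a triple of $M$ (there are $r\le k$ of them), and once we fix which $M$-triple and which column $z$ of $Q$ is not being swapped, the two columns to be swapped in the relevant row are forced; so $O(kn)$ total, using $r\le k$. Conversely, from $Q\in\mc Q_M(\ell)$, each of the $\ell$ good intercalates is destroyed by at least $2(n-2k)-O(k)\ge n$ switchings that preserve the Latin rectangle property (pick one of the two non-$M$ edges of the intercalate, say in row $i$ column $x$ with symbol $s$; for all but at most $2k+O(1)$ columns $y$, swapping $(i,x)$ with $(i,y)$ keeps the rectangle property and destroys that intercalate); these switchings are essentially distinct across the $\ell$ edge-disjoint good intercalates, with at most a bounded multiplicity. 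This yields an inequality of the shape $\bigl(c\,\ell n - O(kn)\bigr)|\mc Q_M(\ell)|\le O(kn)\sum_{j=1}^{O(1)}|\mc Q_M(\ell-j)|$, hence $|\mc Q_M(\ell)|\le (Ck/\ell)\max_{1\le j\le O(1)}|\mc Q_M(\ell-j)|$ for $\ell\ge 10k$, and iterating down to $\ell\approx 10k$ gives $|\mc Q_M(\ell)|\le (k/\ell)^{\Omega(\ell)}|\mc Q|$ exactly as in \cref{lem:star-switching}.

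The main obstacle I anticipate is the bookkeeping around disjointness and multiplicity: unlike the ``first row'' case, where all the relevant intercalates were automatically edge-disjoint outside row $1$, here I must use the \emph{goodness} hypothesis (the three non-$M$ vertices avoid $V(M)$) to argue that distinct good intercalates overlap in a controlled way and that a single switching destroys only $O(1)$ of them, and I must handle the possibility that a switching actually touches an edge of $M$ (creating or destroying the ``$M$-part'' of a potential good intercalate). Provided these $O(1)$-type bounds hold, the constants wash out and the recursion is identical to before; so I would be careful to state the overlap and multiplicity bounds cleanly first, and then the algebra is routine. A secondary point to get right is the constraint $r=k/6$ versus the losses $O(k)$ in the ``destroying'' count — this is why the threshold is $t\ge 20k$ and why $k\le n/10$ is assumed, exactly mirroring \cref{lem:star-switching}.
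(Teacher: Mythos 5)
Your overall strategy (partition $\mc Q$ by the number of good intercalates and run the same switching recursion as in \cref{lem:star-switching}) is the paper's strategy, but there is a genuine gap at the step you yourself flag and then wave away: the claim that your switching changes the number of good intercalates by $O(1)$. As you define it, a switching may act in a row containing a triple of $M$, and such a switching can \emph{create or destroy an edge of $M$ itself}. A single $M$-edge $(i,x,s)$ can lie in up to $k-1$ good intercalates simultaneously (one for each choice of the second row $j$; the good intercalates through a fixed $M$-edge need not share any other edge), so one switching that creates or destroys an $M$-edge can change the good-intercalate count by $\Theta(k)$, not $O(1)$. Your justification conflates ``a switching touches at most $O(1)$ triples of $M$'' with ``it changes the count by $O(1)$''; the first is true, the second is not. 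With jumps of size $\Theta(k)$, the double-counting inequality relates $|\mc Q_M(\ell)|$ to a sum of $\Theta(k)$ lower classes, and the iteration no longer produces $(k/\ell)^{\Omega(\ell)}$ for $\ell\ge 20k$ (you only get bounds with an extra polynomially small, not exponentially small, additive term), which is too weak for the application in the proof of the upper tail.

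The paper's fix is exactly the restriction you omitted: assume without loss of generality that $M$ occupies the first $r$ rows (and first $r$ columns and symbols), and \emph{only perform switchings in rows $i>r$}, so that edges of $M$ are never created or destroyed by a switching. Then any entry touched by a switching lies in at most $3$ good intercalates (it must share a column or symbol with one of the disjoint triples of $M$), so $|\ell-\ell'|\le 6$; moreover at most $rk$ switchings create a good intercalate, while at least $\ell(n-2k)/3$ destroy one, giving
$\big(\ell(n-2k)/3-rk\big)|\mc Q(\ell)|\le rk\sum_{j=1}^{6}|\mc Q(\ell-j)|$,
which iterates exactly as in \cref{lem:star-switching}. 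This is also where the hypothesis $r=k/6$ earns its keep: it guarantees plenty of rows outside $M$ in which to switch, while keeping the creation bound of order $rk\le k^2$ small compared to $\ell n$ for $\ell\ge 20k$ (rather than being merely a ``mirroring'' of the constants from the star case, as you suggest). With that restriction installed, the rest of your outline (creation bound, destruction bound, multiplicity at most $3$, and the iteration) goes through as written.
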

\begin{proof}
Similarly to the proof of \cref{lem:star-switching}, we partition the set of all $k\times n$ Latin rectangles $\mc Q$ into subsets $\mc Q(\ell)$ depending on the number $\ell$ of good intercalates they contain. Without loss of generality we may assume that $M$ involves the first $r$ rows, the first $r$ columns and the first $r$ symbols. We consider the same switching operation as before, but we only consider swaps in rows $i>r$ (i.e., we never switch in the rows where the entries of $M$ live). As in the proof of \cref{lem:star-switching}, we need to prove estimates on the number of ways to switch between different $\mc Q(\ell)$. The arguments will be very similar, so we will be brief with the details.

This time, it is only possible to switch between $\mc Q(\ell)$ and $\mc Q(\ell')$ if $|\ell-\ell'|\le 6$. This is because any given entry outside the first $r$ rows can be involved in at most $3$ good intercalates (it must share a row, column or symbol with an edge in $M$).

The same considerations as before show that for any $Q\in \mc Q$, there are at most $rk$ switchings which create a good intercalate. Also, if we consider any $Q\in \mc Q(\ell)$, there are at least $\ell(n-2k)/3$ switchings which destroy an intercalate in $Q$ (the reason we divide by three is that a single entry can participate in at most three good intercalates). We deduce that, for $\ell\ge 6$,
\[\big(\ell(n-2k)/3-rk\big)\,|\mc Q(\ell)|\le rk\sum_{r=1}^6|\mc Q(\ell-r)|,\]
and we can then iterate this bound to conclude the proof in essentially the same way as \cref{lem:star-switching}.
\end{proof}
Now, a simple combinatorial argument allows us to infer a bound not requiring disjointness.
\begin{lemma}\label{lem:matching-total}
There is a constant $C_{\ref{lem:matching-total}}>0$ such that the following holds. Let $k\le n/10$ and let $\mbf{Q}\in \mc Q$ be a uniformly random order-$n$ Latin rectangle with $k$ rows. Fix a set $F$ of $r\le k/6$ disjoint triples in $R^{(k)}\times C\times S$ (which may or may not appear as edges in $\mbf Q$). Let $N(\mbf Q)$ be the number of intercalates in $\mbf Q$ which include an edge in $F$. Then $\Pr(N(\mbf Q)\ge s)\le (k/s)^{\Omega(s)}$ for $s\ge C_{\ref{lem:matching-total}}k$.
\end{lemma}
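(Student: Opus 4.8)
The plan is to split the intercalates counted by $N(\mathbf Q)$ according to how much they overlap $V(F)$, handling the ``spread-out'' ones via \cref{lem:initial-matching-switching} and the rest via \cref{thm:GM}. First a structural remark: any two edges of an intercalate share a vertex, while the triples of $F$ are pairwise disjoint, so no intercalate contains two edges of $F$; thus $N(\mathbf Q)=\sum_{e\in F}d_e$, where $d_e$ is the number of intercalates of $\mathbf Q$ containing $e$, and each intercalate counted by $N(\mathbf Q)$ has a well-defined ``$F$-edge''. Extend $F$ to a matching $M$ of $\lfloor k/6\rfloor$ disjoint triples in $R^{(k)}\times C\times S$ (there is ample room as $k\le n/10$); let $N_1$ be the number of intercalates counted by $N(\mathbf Q)$ whose three vertices off their $F$-edge all avoid $V(M)$, and $N_2=N(\mathbf Q)-N_1$.

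Every intercalate counted by $N_1$ is \emph{good} for $M$ in the sense of \cref{lem:initial-matching-switching} --- it contains an edge of $M$ (its $F$-edge) and its other three vertices miss $V(M)$ --- so $N_1$ is dominated by the quantity bounded there, giving $\Pr(N_1\ge s/2)\le (k/s)^{\Omega(s)}$ once $s$ exceeds a suitable constant times $k$. For $N_2$ I will run a deletion-style union bound powered by \cref{thm:GM}. An intercalate counted by $N_2$ contains an edge $e\in F$ and passes through some vertex $w\in V(M)$ off $e$; there are $\le|F|\,|V(M)|\le rk$ such pairs $(e,w)$, and once $(e,w)$ is fixed the intercalate has only two remaining vertices, so there are just $O(rkn^2)$ potential ``$N_2$-type'' intercalate hypergraphs --- only $O(n^2)$ per intercalate, because the forced edge $e$ already pins a row, a column and a symbol (this is precisely why we must carry an \emph{edge}, not just a vertex, of $F$). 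Then comes a thinning step: since $F$ is a matching, any non-$F$ edge meets at most three triples of $F$, so a given intercalate through $F$ shares a non-$F$ edge with at most nine others; hence on the event $N_2\ge u$ one greedily extracts $v=\lceil u/10\rceil$ intercalates counted by $N_2$ that pairwise share no edge off $F$. Their union is a partial Latin rectangle $P$ with $\ge 3v$ edges, and (discarding zero-probability configurations, so that intercalates on a common $F$-edge have distinct second rows) one checks $\Delta(P)\le 2k\le n-5k$, the only large row-degrees coming from the $O(r)$ rows used by $F$. So \cref{thm:GM} gives $\Pr(P\subseteq\mathbf Q)\le(2/n)^{3v}$, and union-bounding over the $\le\binom{O(rkn^2)}{v}$ such families and simplifying with $rk/n\le k^2/(6n)\le k/60$ yields $\Pr(N_2\ge u)\le(O(k/u))^{\Omega(u)}$, so $\Pr(N_2\ge s/2)\le(k/s)^{\Omega(s)}$ for $s\ge C_{\ref{lem:matching-total}}k$ with $C_{\ref{lem:matching-total}}$ large. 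Then $\Pr(N(\mathbf Q)\ge s)\le\Pr(N_1\ge s/2)+\Pr(N_2\ge s/2)$ finishes the proof.

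The main obstacle is the $N_2$ estimate. Counting the intercalates through $F$ as arbitrary intercalates would cost $n^{4}$ per intercalate against only the $n^{-3}$ that \cref{thm:GM} returns for three new edges, so the argument genuinely needs both (i) the reduction of the per-intercalate certificate count to $O(n^2)$ via the $F$-edge, and (ii) the thinning step, which keeps $P$ sparse and of bounded row-degree so that \cref{thm:GM} applies and each kept intercalate contributes three fresh edges. The remaining points --- the structural observation, the reduction of $N_1$ to \cref{lem:initial-matching-switching}, the row-degree check, and the final arithmetic --- are routine.
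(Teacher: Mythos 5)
Your proposal is correct, but it handles the crux of the lemma differently from the paper. The paper's proof never splits off a ``clean'' part at all: it shows by the probabilistic method that if some $Q$ has $s$ intercalates through $F$, then a uniformly random half-density subset $M\subseteq F$ makes each such intercalate good with probability at least $1/16$ (the intercalate's $F$-edge is kept, and the at most $3$ other triples of $F$ meeting it are discarded), so some fixed $M\subseteq F$ sees at least $s/16$ good intercalates; it then union-bounds over the $2^r$ subsets $M$, applying \cref{lem:initial-matching-switching} to each, and the $2^r$ factor is absorbed since $r\le k/6\le s$. You instead fix a single matching $M\supseteq F$, bound the spread-out intercalates ($N_1$) by one application of \cref{lem:initial-matching-switching}, and handle the intercalates touching $V(M)$ ($N_2$) by a deletion-type first-moment argument: greedy extraction of a subfamily pairwise sharing no edge off $F$ (conflict degree $O(1)$ because $F$ is a matching and, within a partial Latin rectangle, two intersecting edges lie in at most one common intercalate), a row-degree check $\Delta(P)\le 2k\le n-5k$ so that \cref{thm:GM} applies, and the count $O(rkn^2)$ of potential configurations, giving $\bigl(O(rk/(vn))\bigr)^v=(k/s)^{\Omega(s)}$ since $rk/n\le k/60$; these verifications all go through. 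What each route buys: the paper's sparsification trick is shorter and stays entirely within the switching framework, needing no appeal to \cref{thm:GM}; your route needs more bookkeeping but is quantitatively just as good, and in fact your $N_2$ computation does not use the restriction to intercalates meeting $V(M)$ in any essential way (the count of all potential intercalates through $F$ is already $O(rkn^2)$, and the conflict-degree and row-degree bounds only use that $F$ is a matching), so the deletion argument alone would prove the whole lemma without \cref{lem:initial-matching-switching} --- the splitting into $N_1$ and $N_2$ is harmless but unnecessary. One presentational point in your favour: \cref{lem:initial-matching-switching} is stated for exactly $k/6$ disjoint triples, and you correctly pad $F$ to such an $M$ before invoking it (the paper instead applies it to subsets $M\subseteq F$ of smaller size, implicitly using that the lemma holds for any $r\le k/6$).
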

\begin{proof}
We claim that if any Latin rectangle $Q\in \mc Q$ has at least $s\ge C_{\ref{lem:matching-total}}k$ intercalates involving edges in $F$, then there is a subset $M\subseteq Q$ such that there are at least $s/16$ intercalates in $Q$ which are good with respect to $M$ (i.e., they involve an edge in $M$, and the three vertices outside this edge are completely disjoint from $M$).
This suffices to prove the lemma: if $H$ is large enough then \cref{lem:initial-matching-switching} and the union bound show that with probability at least $1-2^r(16k/s)^{\Omega(s/16)}=1-(k/s)^{\Omega(s)}$, our random Latin rectangle $\mbf R$ has the property that there is no subset $M\subseteq Q$ for which there are at least $s/16$ intercalates in $\mbf R$ which are good with respect to $M$.

To prove the claim, we use the probabilistic method. Consider any Latin rectangle $Q\in \mc Q$, and let $M$ be a random subset of $F$ obtained by including each element of $F$ independently with probability $1/2$. For each intercalate $I$ involving an edge in $e\in F$, note that $I$ is good with respect to $M$ with probability at least $1/16$. Indeed, note that there are at most $3$ edges in $F\setminus \{e\}$ which intersect $I$. The probability that $e\in M$ and the other intersecting edges are not in $M$ is at least $(1/2)\cdot (1/2)^3=1/16$.

By linearity of expectation, the expected number of intercalates which are good with respect to $M$ is at least $s/16$, so there is an outcome of $M$ such that there are at least $s/16$ good intercalates. This completes the proof of the claim.
\end{proof}

We conclude this subsection by using \cref{thm:permanent} to deduce from \cref{lem:star-switching,lem:matching-total} a corresponding result for random Latin squares.

\begin{lemma}\label{lem:star-matching-combined}
Let $k\le n/10$ and $r\le k/6$. Let $\mbf{L}\in \mc L$ be a uniformly random order-$n$ Latin square, and let $\mbf L^{(k)}$ be the subhypergraph induced by $R^{(k)}\cup C^{(k)}\cup S^{(k)}$ (i.e., the first $k$ rows, columns and symbols). Let $K^{(3)}_{k,k,k}$ be the complete 3-uniform 3-partite hypergraph with parts $R^{(k)},C^{(k)},S^{(k)}$, and fix a star or matching $F\subseteq K^{(3)}_{k,k,k}$ with $r$ edges. Let $\mbf N_F$ be the number of intercalates in $\mbf L^{(k)}$ which include an edge in $F$. Then $\Pr(\mbf N_F\ge s)\le \exp(O(n(\log n)^2))(k/s)^{\Omega(s)}$ for $s\ge C_{\ref{lem:matching-total}}k$, where $C_{\ref{lem:matching-total}}$ is the constant in \cref{lem:matching-total}.
\end{lemma}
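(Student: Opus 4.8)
The plan is to transfer the problem to random Latin rectangles, where \cref{lem:star-switching,lem:matching-total} are available, and then use \cref{thm:permanent} to pass back to random Latin squares. The first point to note is that $\mbf N_F$ depends only on the first $k$ rows of $\mbf L$: every intercalate of $\mbf L^{(k)}$ lies entirely inside $R^{(k)}\cup C^{(k)}\cup S^{(k)}$, and the first $k$ rows of $\mbf L$ (restricted to the first $k$ columns) determine exactly which cells $\mbf L_{i,j}$ with $i,j\le k$ carry a symbol in $S^{(k)}$. Hence, writing $\mbf Q\in\mc Q$ for a uniformly random $k\times n$ Latin rectangle, \cref{thm:permanent} gives
\[\Pr(\mbf N_F\ge s)\le e^{O(n(\log n)^2)}\,\Pr(\wt{\mbf N}_F\ge s),\]
where $\wt{\mbf N}_F$ is the number of intercalates of $\mbf Q^{(k)}$ (the restriction of $\mbf Q$ to $R^{(k)}\cup C^{(k)}\cup S^{(k)}$) containing an edge of $F$. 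Since every intercalate of the induced subhypergraph $\mbf Q^{(k)}$ is also an intercalate of $\mbf Q$, the quantity $\wt{\mbf N}_F$ is at most the number of intercalates of $\mbf Q$ (over all columns and symbols) containing an edge of $F$. So it suffices to bound the probability that this latter count is at least $s$ by $(k/s)^{\Omega(s)}$.

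We now split into the two cases. If $F$ is a matching, then it is a set of $r\le k/6$ disjoint triples in $R^{(k)}\times C^{(k)}\times S^{(k)}\subseteq R^{(k)}\times C\times S$, and the desired bound is exactly the conclusion of \cref{lem:matching-total}. If $F$ is a star, let $v$ be its common vertex; then $v\in R^{(k)}\cup C^{(k)}\cup S^{(k)}$. Applying one of the symmetries of Latin squares that permute the roles of rows, columns and symbols — such a symmetry preserves the uniform distribution on $\mc L$, carries the induced subhypergraph on $R^{(k)}\cup C^{(k)}\cup S^{(k)}$ to the corresponding induced subhypergraph of the transformed square, sends stars to stars and intercalates to intercalates, and can be chosen to send $v$ to a row vertex — and then relabelling the rows in $R^{(k)}$, we may assume $v$ is the first row. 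Then every intercalate of $\mbf Q$ containing an edge of $F$ involves the first row of $\mbf Q$, so the count above is at most $N(\mbf Q)$ in the notation of \cref{lem:star-switching}. Since $k\le n/10$ and $s\ge C_{\ref{lem:matching-total}}k\ge 20k$ (enlarging $C_{\ref{lem:matching-total}}$ if necessary, which only weakens \cref{lem:matching-total}), \cref{lem:star-switching} gives $\Pr(N(\mbf Q)\ge s)\le(k/s)^{\Omega(s)}$.

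Combining the two cases with the transfer inequality yields $\Pr(\mbf N_F\ge s)\le e^{O(n(\log n)^2)}(k/s)^{\Omega(s)}$ for $s\ge C_{\ref{lem:matching-total}}k$, as claimed. There is no real obstacle here: the substance of the argument is contained in the switching lemmas \cref{lem:star-switching,lem:matching-total}, and the only points requiring care are verifying that $\mbf N_F$ genuinely depends only on the first $k$ rows (so that \cref{thm:permanent} applies) and that the role-permuting symmetry used to normalise the centre of the star in the star case is compatible with the restriction to $R^{(k)}\cup C^{(k)}\cup S^{(k)}$.
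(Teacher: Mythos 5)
Your proposal is correct and follows essentially the same route as the paper: change measure from the first $k$ rows of $\mbf L$ to a random $k\times n$ Latin rectangle via \cref{thm:permanent} (losing $e^{O(n(\log n)^2)}$), then invoke \cref{lem:matching-total} in the matching case and \cref{lem:star-switching} in the star case, using the row/column/symbol conjugacy symmetry to place the star's centre in the first row. The extra details you verify (that $\mbf N_F$ is determined by the first $k$ rows, the monotonicity when passing from $\mbf Q^{(k)}$ to $\mbf Q$, and compatibility of the symmetry with the induced subhypergraph) are exactly the points the paper treats implicitly.
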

\begin{proof}
If $F$ is a matching, we may assume without loss of generality that it involves the first $r$ rows. The desired result then follows from \cref{lem:matching-total} and \cref{thm:permanent} (recall that by \cref{thm:permanent}, we lose a factor of at most $\exp(O(n(\log n)^2))$ when changing measure from a random $k\times n$ Latin rectangle to the first $k$ rows of a random Latin square).

If $F$ is a star, without loss of generality we may assume that all of its edges are in the first row (recall that there is a symmetry between the rows, columns and symbols of a Latin square). We then apply \cref{lem:star-switching} and \cref{thm:permanent} in the same way.
\end{proof}

\subsection{Completing the proof}
We are now ready to bound the upper tail deviation probability.
\begin{proof}[Proof of \cref{thm:large-deviations}(b)]
We may assume $\delta$ is sufficiently small (the desired bound only becomes stronger as we make $\delta$ smaller). Let $k=\lfloor\delta^2 n\rfloor$, and let $\mbf L^{(k)}$ be the subhypergraph induced by $R^{(k)}\cup C^{(k)}\cup S^{(k)}$ (i.e., the first $k$ rows, columns and symbols). Let $\mbf N_k$ be the number of intercalates in $\mbf L^{(k)}$; we will prove that $\Pr(\mbf N_k\ge(1+2\delta/3)k^6/(4n^4))\le \exp(-\Omega(n^{4/3} (\log n)^{2/3}))$. To see that this suffices, note that by symmetry and the union bound, it would follow that $\binom nk^3\exp(-\Omega(n^{4/3} (\log n)^{2/3}))=\exp(-\Omega(n^{4/3} (\log n)^{2/3}))$ is an upper bound on the probability that there is \emph{any} choice of $k$ rows, columns and symbols which contains more than $(1+2\delta/3)k^6/(4n^4)$ intercalates. But if an order-$n$ Latin square $L$ contains at least $(1+\delta)n^2/4$ intercalates, then by averaging there is some subset of $k$ rows, $k$ columns and $k$ symbols inducing at least $(1+2\delta/3)k^6/(4n^4)$ intercalates.

So, we study intercalates in $\mbf L^{(k)}$. Let $K^{(3)}_{k,k,k}$ be the complete 3-uniform 3-partite hypergraph with parts $R^{(k)},C^{(k)},S^{(k)}$, and for a set of edges $E\subseteq K^{(3)}_{k,k,k}$, let $\mbf N_E$ be the number of intercalates in $\mbf L^{(k)}$ involving an edge of $E$. For every possible outcome of $\mbf L^{(k)}$, let $\mbf E_0\subseteq \mbf L^{(k)}$ be a subset of $m=n^{4/3}(\log n)^2/3$ edges of $\mbf L^{(k)}$ such that $\mbf N_{\mbf E_0}$ is maximised. By \cref{lem:deletion}, it suffices to show that $\Pr(\mbf N_{\mbf E_0}\ge (\delta/6)k^6/(4n^4))\le \exp(-\Omega(n^{4/3} (\log n)^{2/3}))$.

Let $r=\sqrt m=n^{1/3}(\log n)^{1/3}$. By \cref{lem:star-matching}, we can always partition $\mbf E_0$ into at most $3r+m/r=4r$ stars and matchings each with at most $r$ edges. In order to have $\mbf N_{\mbf E_0}\ge (\delta/6)k^6/(4n^4)$, there must be some $r$-edge star or matching $F$ with
\[\mbf N_F\ge \frac{(\delta/6)k^6/(4n^4)}{4r}=\Omega\big(n^{4/3}(\log n)^{-1/3}\big).\]
But this occurs with probability at most $\binom{n^3}{k}\exp(-\Omega(n^{4/3} (\log n)^{2/3}))=\exp(-\Omega(n^{4/3} (\log n)^{2/3}))$ by \cref{lem:star-matching-combined} and the union bound.
\end{proof}

\bibliographystyle{amsplain_initials_nobysame_nomr}
\bibliography{main.bib}

\end{document}